\newtheorem{theorem}{Theorem}
\theoremstyle{plain}
\newtheorem{lemma}{Lemma}
\numberwithin{equation}{section}
\begin{document}
\title[Hexagonal Fourier Series]{on ces\`{a}ro and Abel-Poisson means of hexagonal fourier series}
\author{Ali Guven}
\address{Department of Mathematics, Faculty of Arts and Sciences, Balikesir
University, 10145 Balikesir, Türk\. {ı}ye.} 
\email{guvennali@gmail.com}
\date{}
\thanks{Research supported by Balikesir University under grant number 2024/019}
\subjclass[2020]{41A25, 41A63, 42B08}
\keywords{Abel Poisson-means, Ces\`{a}ro means, degree of approximation, hexagonal Fourier series}

\begin{abstract}
Approximation properties of Ces\`{a}ro and Abel-Poisson means of hexagonal Fourier series are studied. The degree of approximation by these means of hexagonal Fourier series of functions, which are continuous and periodic with respect to the hexagon lattice, is estimated in terms of modulus of continuity of functions.
\end{abstract}

\maketitle

\section{Introduction}
Approximation properties of Ces\`{a}ro $(C,\delta)$ and Abel-Poisson means of Fourier series of $2\pi$-periodic functions of a real variable were studied by many mathematicians. The monographs \cite{zhizhiashvili} and \cite{zygmund} contain a lot of results about convergence and the degree of approximation of these means. In same monographs, there are also results regarding approximation properties of Ces\`{a}ro and Abel-Poisson means of functions of two or more variables. Approximation properties of functions of two or more variables are studied usually by assuming that the functions are periodic with respect to each of their variables. In approximation theory of functions of several real variables another definitions of periodicity are also used. The periodicity defined by lattices is the most useful periodicity. This definition of periodicity allows us to study trigonometric approximation problems on non tensor product domains of the Euclidean space, for instance, on regular hexagons in the plane.

 A lattice in the $d-$  dimensional Euclidean space $ \mathbb{R}^{d}$ is the discrete subgroup 
\begin{equation}
    L_{A}:=A\mathbb{Z}^{d}=\lbrace Ak:k\in \mathbb{Z}^{d}\rbrace, \notag
\end{equation}
where $A$ is a $d\times d$ matrix with linearly independent columns, which is called the generator matrix of the lattice $L_{A}$. A bounded set $\Omega \subset \mathbb{R}^{d}$ is said to tile $\mathbb{R}^{d}$ with the lattice $L_{A}$ if
\begin{equation}
    \sum_{k \in \mathbb{Z}^{d}}\chi_{\Omega}(x+Ak)=1\notag
\end{equation}
for almost all $x\in \mathbb{R}^{d}$. The set $\Omega$ is called a spectral set for the lattice $L_{A}$ if it tiles $\mathbb{R}^{d}$ with $L_{A}$. For a given lattice the spectral set is not unique. We fix $\Omega$ such that $\Omega$ contains the point $0$ in its interior and the tiling holds pointwise and without overlapping, that is 
\begin{equation}
    \sum_{k \in \mathbb{Z}^{d}}\chi_{\Omega}(x+Ak)=1\notag
\end{equation}
for all $x\in \mathbb{R}^{d}$ and
\begin{equation}
    (\Omega +Ak)\cap (\Omega+Aj)=\emptyset\notag
\end{equation}
for $k\neq j$.
For example we take $\Omega=\left [-\frac{1}{2},\frac{1}{2}\right )^{d}$ for the standard lattice $L_{I_{d}}=\mathbb{Z}^{d}.$

Let $\Omega$ be a spectral set for the lattice $L_{A}$. Consider the space $L^{2}(\Omega),$ which is a Hilbert space with respect to the inner product
\begin{equation}
     \langle f,g\rangle_{L^{2}(\Omega)}=\frac{1}{\left |\Omega\right |}\int_{\Omega}f(x)\overline{g(x)}dx,\notag
\end{equation}
where $\left |\Omega\right |$ denotes the Lebesgue measure of $\Omega$. The following theorem of B. Fuglede allows us to define Fourier series on the spectral set $\Omega$:
\begin{theorem}[\cite{fuglede}]
    The open bounded set $\Omega$ is a spectral set for the lattice $L_{A}$ if and only if the set
    \begin{equation}
        \lbrace e^{2\pi i\langle A^{-tr}k,x\rangle}:k\in \mathbb{Z}^{d}\rbrace\notag
    \end{equation}
is an orthonormal basis of $L^{2}(\Omega)$.
\end{theorem}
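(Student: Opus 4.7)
My plan is to exploit the linear change of variables $y=A^{-1}x$ to reduce to the case $A=I$: this substitution maps $\Omega$ to $A^{-1}\Omega$, the lattice $L_A$ to $\mathbb{Z}^d$, and, via the identity $\langle A^{-tr}k,\,Ay\rangle=\langle k,y\rangle$, the proposed exponentials to $\{e^{2\pi i\langle k,y\rangle}:k\in\mathbb{Z}^d\}$ on $L^2(A^{-1}\Omega)$. Both the tiling property and the orthonormal-basis property are preserved under this transformation, so it suffices to work with the standard lattice.

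For the forward implication, I would deduce orthonormality from Poisson summation applied to $\chi_\Omega$. The tiling identity $\sum_{k}\chi_\Omega(x+k)=1$ realises the $\mathbb{Z}^d$-periodization of $\chi_\Omega$ as the constant function $1$; comparing Fourier coefficients on $\mathbb{T}^d$ then yields $\widehat{\chi_\Omega}(k)=\int_\Omega e^{-2\pi i\langle k,x\rangle}\,dx=\delta_{k,0}$, which in particular forces $|\Omega|=1$ and is exactly the orthonormality relation with respect to $\langle\cdot,\cdot\rangle_{L^2(\Omega)}$. For completeness, I would extend any $f\in L^2(\Omega)$ to a $\mathbb{Z}^d$-periodic function $\tilde f$ on $\mathbb{R}^d$, which is well defined almost everywhere by the tiling and lies in $L^2(\mathbb{T}^d)$ with the same norm (both $\Omega$ and $[0,1)^d$ are fundamental domains); the tiling also guarantees that the $\mathbb{T}^d$-Fourier coefficients of $\tilde f$ coincide with those of $f$ on $\Omega$, so the classical completeness of the trigonometric system on $\mathbb{T}^d$ transfers to $L^2(\Omega)$.

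For the reverse implication, orthogonality at once yields $\widehat{\chi_\Omega}(k)=|\Omega|\,\delta_{k,0}$, and Poisson summation, legitimate because $\chi_\Omega$ is bounded and compactly supported, then gives $\sum_{k}\chi_\Omega(x+k)=|\Omega|$ almost everywhere. Since the left-hand side is integer-valued, $|\Omega|$ must be a positive integer $m$, and what one has is so far only an $m$-fold cover. The main obstacle is to rule out $m\geq 2$, and this is where the completeness half of the orthonormal-basis hypothesis becomes essential. I would construct, via a measurable selection (for instance by lexicographically ordering the $m$ preimages inside $\Omega$ of each point of $\mathbb{R}^d/\mathbb{Z}^d$), a subset $\Omega_1\subset\Omega$ that genuinely tiles $\mathbb{R}^d$ with $\mathbb{Z}^d$, and then test the hypothesis against $f=\chi_{\Omega_1}$: a short computation gives $\|f\|^2_{L^2(\Omega)}=1/m$, while the Poisson-summation identity of the previous paragraph applied to $\Omega_1$ shows that $\hat f(j)=\delta_{j,0}/m$. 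Parseval's identity for the assumed orthonormal basis then demands $1/m=1/m^2$, whence $m=1$ and the cover is a genuine tiling.
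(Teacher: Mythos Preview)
The paper does not prove this statement at all: Theorem~1 is quoted from Fuglede's paper and used only as background to justify the definition of hexagonal Fourier series. There is therefore no proof in the paper to compare your proposal against.

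That said, your argument is a correct and standard route to the result. Two small remarks. First, the justification ``Poisson summation, legitimate because $\chi_\Omega$ is bounded and compactly supported'' is a bit loose as stated; what you are really using is that the $\mathbb{Z}^d$-periodization of $\chi_\Omega$ lies in $L^1(\mathbb{T}^d)$ and has Fourier coefficients $\widehat{\chi_\Omega}(k)$, together with the uniqueness theorem for Fourier coefficients on the torus, so that vanishing of all nonzero coefficients forces the periodization to equal the constant $|\Omega|$ almost everywhere. No pointwise convergence of the Poisson formula is needed. Second, the measurable selection of a genuine fundamental domain $\Omega_1\subset\Omega$ is unproblematic here because $\Omega$ is bounded: only finitely many lattice translates $\Omega-k$ meet $[0,1)^d$, so one may partition $[0,1)^d$ into finitely many measurable pieces according to which $m$-element set of lattice points covers each point, and pick one translate on each piece. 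With these clarifications your Parseval computation $1/m=1/m^2$ goes through and forces $m=1$.
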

Hence, if  $\Omega$ is a spectral set for the lattice $L_{A}$, the Fourier series of a function $f\in L^{2}(\Omega)$ becomes 
\begin{equation}
    \sum_{k \in \mathbb{Z}^{d}}\widehat{f}_{k}e^{2\pi i\langle A^{-tr}k,x\rangle},\notag
\end{equation}
where 
\begin{equation}
    \widehat{f}_{k}=\frac{1}{\left |\Omega\right |}\int_{\Omega}f(x)e^{-2\pi i\langle A^{-tr}k,x\rangle}dx, \hspace{2mm}k \in \mathbb{Z}^{d}\notag
\end{equation}
are Fourier coefficients.

A function defined on $\mathbb{R}^{d}$ is called periodic with respect to the lattice $L_{A}$ or $A-$periodic if 
\begin{equation}
    f(x+Ak)=f(x)\notag
\end{equation}
for all $k\in \mathbb{Z}^{d}.$ 

More detailed information on Fourier analysis with lattices can be found in \cite{li sun xu}.

\section{Hexagonal Fourier Series}

Besides the rectangular domain, the simplest and the most useful spectral set is a regular hexagon on the plane $\mathbb{R}^{2}$. The generator matrix and the spectral set of the hexagonal lattice $H\mathbb{Z}^{2}$ are given by
\begin{equation}
    H=\begin{pmatrix}
\sqrt{3}&0\\
-1&2
\end{pmatrix} \notag
\end{equation}
and
\begin{equation}
    \Omega_{H}=\lbrace (x_{1},x_{2})\in \mathbb{R}^{2}:-1\leq x_{2},\frac{\sqrt{3}}{2}x_{1}\pm \frac{1}{2}x_{2}< 1\rbrace.\notag
\end{equation}
We denote the plane $t_{1}+ t_{2}+ t_{3}=0$ by $\mathbb{R}_{H}^{3}$, that is 
\begin{equation}
    \mathbb{R}_{H}^{3}=\lbrace (t_{1}, t_{2}, t_{3})\in \mathbb{R}^{3}:t_{1}+ t_{2}+ t_{3}=0\rbrace.\notag
\end{equation}
Elements of the set $\mathbb{R}_{H}^{3}$ are called homogeneous coordinates.
Under the transform
\begin{equation}
    t_{1}:=-\frac{x_{2}}{2}+\frac{\sqrt{3}x_{1}}{2}, t_{2}:=x_{2}, t_{3}:= -\frac{x_{2}}{2}-\frac{\sqrt{3}x_{1}}{2},\notag
\end{equation}
the hexagon $\Omega_{H}$ becomes
\begin{equation}
    \Omega =\lbrace (t_{1}, t_{2}, t_{3})\in \mathbb{R}_{H}^{3}:-1\leq t_{1}, t_{2}, -t_{3}<1\rbrace.\notag
\end{equation}
We use bold letters \textbf{t}, \textbf{s}, etc. to denote homogeneous coordinates. Also we denote the subset of $\mathbb{R}_{H}^{3}$ consists of points with integer components by $\mathbb{Z}_{H}^{3}.$

A function $f:\mathbb{R}^{2}\rightarrow \mathbb{C}$ is periodic with respect to the hexagonal lattice, or briefly $H-$periodic if
\begin{equation}
    f(x+Hk)=f(x),\hspace{3mm} k\in \mathbb{Z}^{2}.\notag
\end{equation}
If we define  
\begin{equation}
    \textbf{t}\equiv \textbf{s} \hspace{3mm}(\text{mod3})  \Longleftrightarrow t_{1}-s_{1}\equiv t_{2}-s_{2}\equiv t_{3}-s_{3}\hspace{3mm} (\text {mod3})\notag
\end{equation}
for $\textbf{t}=(t_{1}, t_{2}, t_{3}), \textbf{s}=(s_{1}, s_{2}, s_{3})\in \mathbb{R}_{H}^{3} $, it follows that the function $f$ is $H-$periodic if and only if 
\begin{equation}
    f(\textbf{t}+\textbf{s})=f(\textbf{t})\notag
\end{equation}
whenever $\textbf{s}\equiv \textbf{0}$ (mod3), in terms of homogeneous coordinates. Also, 
\begin{equation}\label{periodic}
\int_{\Omega}f(\textbf{t}+\textbf{s})d\textbf{t}=\int_{\Omega}f(\textbf{t})d\textbf{t},\hspace{2mm} \textbf{s}\in \mathbb{R}_{H}^{3}
\end{equation}
for the $H-$periodic function $f$ (\cite{xu}).

$L^{2}({\Omega})$ is a Hilbert space with respect to the inner product
\begin{equation}
    \langle f,g\rangle_{H}:=\frac{1}{\left |\Omega \right |}\int_{\Omega}f(\textbf{t})\overline{g(\textbf{t})}d\textbf{t},\notag
\end{equation}
where $\left |\Omega \right |$ denotes the area of $\Omega.$ The functions
\begin{equation}
    \phi_{\textbf{j}}(\textbf{t}):=e^{\frac{2\pi i}{3}\langle \textbf{j},\textbf{t}\rangle},\hspace{2mm} \textbf{j}\in \mathbb{Z}^{3}_{H}, \textbf{t}\in \mathbb{R}^{3}_{H},\notag
\end{equation}
where $\langle \textbf{j},\textbf{t}\rangle$ is the usual Euclidean inner product of  \textbf{j} and \textbf{t}, are $H-$periodic, and by Theorem 1, the set
\begin{equation}
    \lbrace  \phi_{\textbf{j}}:\textbf{j}\in \mathbb{Z}^{3}_{H}\rbrace\notag
\end{equation}
becomes an orthonormal basis for $L^{2}({\Omega}).$  See \cite{li sun xu} and \cite{xu} for details.

For every natural number $n$ we define the set
\begin{equation}
    \mathbb{H}_{n}:=\lbrace \textbf{j}=(j_{1},j_{2},j_{3})\in \mathbb{Z}^{3}_{H}:-n\leq j_{1},j_{2},j_{3}\leq n\rbrace, \notag
\end{equation}
which consists of points inside the hexagon $n\overline{\Omega}$ with integer components.

The hexagonal Fourier series of an $H-$periodic function $f\in L^{2}(\Omega)$ is defined by
\begin{equation}\label{series}
    f(\textbf{t})\sim \sum_{\textbf{j}\in \mathbb{Z}^{3}_{H}}\widehat{f}_{\textbf{j}}\phi_{\textbf{j}}(\textbf{t}),
\end{equation}
where 
\begin{equation}
    \widehat{f}_{\textbf{j}}=\frac{1}{\left |\Omega \right |}\int_{\Omega}f(\textbf{t})\overline{\phi_{\textbf{j}}(\textbf{t})}d\textbf{t},\hspace{3mm}\textbf{j}\in \mathbb{Z}^{3}_{H}.\notag
\end{equation}
The $n$-th hexagonal partial sum of (\ref{series}) is defined by 
\begin{equation}\label{partial}
    S_{n}(f)(\textbf{t}):=\sum_{\textbf{j}\in \mathbb{H}_{n}}\widehat{f}_{\textbf{j}}\phi_{\textbf{j}}(\textbf{t})=\frac{1}{\left |\Omega \right |}\int_{\Omega}f(\textbf{t}-\textbf{s})D_{n}(\textbf{s})d\textbf{s},
\end{equation}
where the second equality of (\ref{partial}) follows from (\ref{periodic}) with the kernel $D_{n}$ defined by
\begin{equation}
    D_{n}(\textbf{t}):=\sum_{\textbf{j}\in \mathbb{H}_{n}}\phi_{\textbf{j}}(\textbf{t}).\notag
\end{equation}
The kernel $D_{n}$ is an analogue of the Dirichlet kernel for the classical Fourier series. It is known by \cite{li sun xu} (see also \cite{xu}) that
\begin{equation}\label{dirichlet}
    D_{n}(\textbf{t})=\Theta_{n}(\textbf{t})-\Theta_{n-1}(\textbf{t}),
\end{equation}
where
\begin{equation}\label{theta}
    \Theta_{n}(\textbf{t}):=\frac{\sin \frac{(n+1)(t_{1}-t_{2})\pi}{3}\sin \frac{(n+1)(t_{2}-t_{3})\pi}{3}\sin \frac{(n+1)(t_{3}-t_{1})\pi}{3}}{\sin \frac{(t_{1}-t_{2})\pi}{3}\sin \frac{(t_{2}-t_{3})\pi}{3}\sin \frac{(t_{3}-t_{1})\pi}{3}}
\end{equation}
for $\textbf{t}=(t_{1},t_{2},t_{3})\in \mathbb{R}^{3}_{H}.$

The degree of approximation by various means of hexagonal Fourier series was studied in \cite{guven jca}, \cite{guven mia}, \cite{guven cmft}, \cite{guven zaa}, \cite{guven jnaat 2018}, \cite{guven rima}, \cite{guven jnaat 2020} and \cite{guven tjm}. In \cite{guven jca}, \cite{guven mia} and \cite{guven cmft}, some estimates for the degree of approximation of $(C,1)$ and Abel-Poisson means of hexagonal Fourier series were obtained. But, in these works there are no estimates in terms of the moduli of continuity of functions. In this article the degree of approximation of $(C,1)$ and Abel-Poisson means of hexagonal series of a function belong to $C(\overline{\Omega})$ is estimated directly in terms of modulus of continuity of the function.

\section{Approximation by ($C,\delta$) Means on Hexagonal Domain}

For $\delta > 0$ we denote the Ces\`{a}ro $(C,\delta)$ means of the series (\ref{series}) by $S^{(\delta)}_{n}(f)$, i. e.

\begin{equation}
    \ S^{(\delta)}_{n}(f)(\textbf{t})=\frac{1}{A^{\delta}_{n}}\sum_{k=0}^{n}A^{\delta-1}_{n-k}S_{k}(f)(\textbf{t}),\hspace{3mm}A^{\delta}_{n}=\begin{pmatrix}
        n+\delta \\
        \delta
    \end{pmatrix}.\notag
\end{equation}
It is easy to show that
\begin{equation}\label{cesarodelta}
    S^{(\delta)}_{n}(f)(\textbf{t})=\frac{1}{\left |\Omega\right |}\int_{\Omega}f(\textbf{t}-\textbf{s})K^{(\delta)}_{n}(\textbf{s})d\textbf{s},
\end{equation}
where
\begin{equation}
    K^{(\delta)}_{n}(\textbf{t})=\frac{1}{A^{\delta}_{n}}\sum_{k=0}^{n}A^{\delta-1}_{n-k}D_{k}(\textbf{t}).\notag
\end{equation}
If we set $\Theta_{-1}(\textbf{t}):=0$, we can express the kernel $K^{(\delta)}_{n}$ as
\begin{equation}\label{thetadelta}
    K^{(\delta)}_{n}(\textbf{t})=\frac{1}{A^{\delta}_{n}}\sum_{k=0}^{n}A^{\delta-1}_{n-k}\left (\Theta_{k}(\textbf{t})-\Theta_{k-1}(\textbf{t})\right )
\end{equation}
by aim of (\ref{dirichlet}).
Since 
\begin{equation}
    \frac{1}{\left |\Omega\right |}\int_{\Omega}D_{k}(\textbf{t})d\textbf{t}=1\notag
\end{equation}
and
\begin{equation}
    A^{\delta}_{n}=\sum_{k=0}^{n}A^{\delta-1}_{k}\notag
\end{equation}
we have
\begin{equation}\label{kernel}
    \frac{1}{\left |\Omega\right |}\int_{\Omega}K^{(\delta)}_{n}(\textbf{t})d\textbf{t}=1.
\end{equation}

Let $C(\overline {\Omega})$ be the Banach space of $H-$periodic continuous functions $f:\mathbb{R}^{3}_{H}\rightarrow \mathbb{C}$, whose norm is the uniform norm:
\begin{equation}
    \left \|f\right \|_{C(\overline {\Omega})}=\sup_{\textbf{t}\in \overline {\Omega}}\left |f(\textbf{t})\right |.\notag
\end{equation}
The modulus of continuity of a function $f\in C(\overline {\Omega})$ is defined by
\begin{equation}
    \omega_{f} (u):=\sup_{0<\left \|\textbf{t}\right \|\leq u}\left \|f-f(\cdot +\textbf{t})\right \|_{C(\overline {\Omega})},\hspace{3mm}u>0\notag
\end{equation}
where
\begin{equation}
    \left \|\textbf{t}\right \|=\max \lbrace \left |t_{1}\right |,\left |t_{2}\right |, \left |t_{3}\right |\rbrace,\hspace{3mm} \textbf{t}=(t_{1},t_{2},t_{3})\in \mathbb{R}^{3}_{H}.\notag
\end{equation}
$\omega_{f} (\cdot)$ is a non-negative and non-decreasing function which satisfies 
\begin{equation}\label{mod}
    \omega_{f} (\lambda u)\leq (1+\lambda)\omega_{f} (u)
\end{equation}
for $\lambda >0$ (\cite{xu}).

In the rest of the paper we shall write $A\lesssim B$ for the quantities $A$ and $B$ if there exists a constant $K>0$ ($K$ is an absolute constant, or a constant depending only on parameters which are not important for the questions involve in the paper) such that $A\leq KB$ holds.
\begin{lemma}
    The inequality
    \begin{equation}\label{cos}
        \left | \frac{1}{A^{\delta}_{n}}\sum_{k=0}^{n}A^{\delta-1}_{n-k}\cos (2k+1 )t    \right |\lesssim \frac{1}{(n+1)^{\delta}(\sin t)^{\delta}}+\frac{1}{(n+1)\sin t}
    \end{equation}
    holds for $0<\delta <1, 0<t<\pi$ and for every natural number $n.$
\end{lemma}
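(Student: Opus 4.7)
The plan is to complexify the cosine, recognize the resulting sum as a truncated power series for $(1-w)^{-\delta}$, and handle the tail by Abel summation.

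First I would write $\cos((2k+1)t)=\mathrm{Re}(e^{i(2k+1)t})$, extract the factor $e^{it}$, and reindex by $j=n-k$. In modulus the problem reduces to estimating
\[ |P_n(w)|,\qquad P_n(w):=\sum_{j=0}^{n}A^{\delta-1}_j w^j, \]
at $w=e^{-2it}$, since $\sum_{k=0}^{n}A^{\delta-1}_{n-k}e^{i(2k+1)t}$ and $w^{-n}P_n(w)$ differ only by unimodular factors. Hence everything comes down to bounding one truncated binomial sum.

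Next I would invoke the generating-function identity
\[ (1-w)^{-\delta}=\sum_{j=0}^{\infty}A^{\delta-1}_j w^j,\qquad |w|<1. \]
For $0<\delta<1$ the coefficients $A^{\delta-1}_j$ are positive, monotonically decreasing (the ratio $A^{\delta-1}_{j+1}/A^{\delta-1}_j=(j+\delta)/(j+1)$ is strictly less than $1$), and satisfy $A^{\delta-1}_j\lesssim j^{\delta-1}$. By Dirichlet's test the series converges at every $w=e^{-2it}$ with $t\in(0,\pi)$, and Abel's continuity theorem extends the identity to the boundary, giving
\[ P_n(w)=(1-w)^{-\delta}-\sum_{j=n+1}^{\infty}A^{\delta-1}_j w^j. \]

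The closed-form term is estimated immediately using $|1-e^{-2it}|=2\sin t$, so $|(1-w)^{-\delta}|=(2\sin t)^{-\delta}$. For the tail, the partial sums of $\sum w^j$ are uniformly bounded by $2/|1-w|=1/\sin t$, and hence Abel's inequality (applied to a decreasing non-negative sequence) gives
\[ \left|\sum_{j=n+1}^{\infty}A^{\delta-1}_j w^j\right|\lesssim \frac{A^{\delta-1}_{n+1}}{\sin t}\lesssim \frac{(n+1)^{\delta-1}}{\sin t}. \]
Adding the two estimates, taking real parts, and dividing by $A^{\delta}_n$ (which is comparable to $(n+1)^\delta$) produces the required inequality. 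The chief technical point is the tail bound: it relies on the positivity and monotonicity of the Cesàro numbers $A^{\delta-1}_j$ together with the asymptotic $A^{\delta-1}_j\lesssim j^{\delta-1}$ valid for $0<\delta<1$; after those are in place, the remaining manipulations are routine rearrangement.
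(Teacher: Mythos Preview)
Your argument is correct, and it is essentially the same as the paper's; the only difference is one of presentation. The paper simply quotes the Ces\`aro-kernel estimate from Zygmund (the bound on $\tfrac{1}{A_n^\delta}\sum A_{n-k}^{\delta-1}e^{i(2k+1)t/2}$), takes real parts, rescales $t$, and finishes with $A_n^\delta\approx(n+1)^\delta$ and $A_{n+1}^{\delta-1}/A_n^\delta=\delta/(n+1)$. What you have done is reprove that quoted estimate from scratch: Zygmund's bound is exactly the split $(1-w)^{-\delta}$ plus Abel-summation tail that you carry out. So your proof is self-contained where the paper outsources the key step, but the underlying mechanism is identical.
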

\begin{proof}
    It is known that (\cite[page 94]{zygmund})
    \begin{equation}
     \left | \frac{1}{A^{\delta}_{n}\sin \frac{t}{2}} \sum_{k=0}^{n}A^{\delta-1}_{n-k}e^{i\left ((2k+1 )\frac{t}{2}\right )} \right |\lesssim \frac{1}{A^{\delta}_{n}\left (2\sin \frac{t}{2}\right )^{\delta +1}} +\frac{2A^{\delta -1}_{n+1}}{A^{\delta}_{n}\left (2\sin \frac{t}{2}\right )^{2}} \notag
    \end{equation}
    for $0<t<2\pi.$ Hence, we have for $0<t<2\pi$
    \begin{equation}
   \left | \frac{1}{A^{\delta}_{n}\sin \frac{t}{2}}\sum_{k=0}^{n}A^{\delta-1}_{n-k}\cos (2k+1)\frac{t}{2}\right |\lesssim \frac{1}{A^{\delta}_{n}\left (2\sin \frac{t}{2}\right )^{\delta +1}} +\frac{2A^{\delta -1}_{n+1}}{A^{\delta}_{n}\left (2\sin \frac{t}{2}\right )^{2}}\notag
    \end{equation}
  since $\left |\Re (z)\right |\leq \left |z\right |$ for every complex number $z$. This implies
  \begin{equation}
    \left | \frac{1}{A^{\delta}_{n}}\sum_{k=0}^{n}A^{\delta-1}_{n-k}\cos (2k+1)t\right |\lesssim \frac{1}{A^{\delta}_{n}\left (\sin t\right )^{\delta}} +\frac{A^{\delta -1}_{n+1}}{A^{\delta}_{n}\sin t}\hspace{3mm} (0<t<\pi ).\notag
  \end{equation}
  Since 
  \begin{equation}
      \frac{A^{\delta -1}_{n+1}}{A^{\delta}_{n}}=\frac{\delta}{n+1}\notag
  \end{equation}
  we get
  \begin{equation}
    \left | \frac{1}{A^{\delta}_{n}}\sum_{k=0}^{n}A^{\delta-1}_{n-k}\cos (2k+1)t\right |\lesssim \frac{1}{A^{\delta}_{n}\left (\sin t\right )^{\delta}} +\frac{1}{(n+1)\sin t}.\notag
  \end{equation}
  This inequality and the fact $A^{\delta}_{n}\approx (n+1)^{\delta}$ yields (\ref{cos}).
\end{proof}

\begin{theorem}
For $0<\delta <1$ the kernel $K^{(\delta)}_{n}$ satisfies the inequality
\begin{equation}\label{Kndelta}
    \int_{\Omega}\left |K^{(\delta)}_{n}(\textbf{t})\right |d\textbf{t}\lesssim \log (n+1).
\end{equation}
\end{theorem}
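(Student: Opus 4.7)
The plan is to rewrite $K_{n}^{(\delta)}$ as a combination of three one-dimensional Ces\`{a}ro cosine sums to which Lemma~1 applies, divided by products of the sines appearing in (\ref{theta}), and then to integrate over $\Omega$ with a region-splitting argument.

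Writing $u := \pi(t_{1}-t_{2})/3$, $v := \pi(t_{2}-t_{3})/3$, $w := \pi(t_{3}-t_{1})/3$ so that $u+v+w=0$, and $D(\textbf{t}) := \sin u\sin v\sin w$, I would first apply the identity
\[
\sin\alpha\sin\beta\sin\gamma = -\tfrac{1}{4}\bigl(\sin 2\alpha + \sin 2\beta + \sin 2\gamma\bigr)\qquad(\alpha+\beta+\gamma=0)
\]
to (\ref{theta}) with $\alpha,\beta,\gamma = (k+1)u,(k+1)v,(k+1)w$, obtaining
\[
\Theta_{k}(\textbf{t}) = -\frac{1}{4D(\textbf{t})}\sum_{\zeta\in\{u,v,w\}}\sin\bigl(2(k+1)\zeta\bigr).
\]
Since $\sin(2(k+1)\zeta)-\sin(2k\zeta) = 2\sin\zeta\cos((2k+1)\zeta)$, substituting this difference into (\ref{thetadelta}) gives
\[
K_{n}^{(\delta)}(\textbf{t}) = -\frac{1}{2}\left[\frac{\Phi_{n}^{(\delta)}(u)}{\sin v\sin w}+\frac{\Phi_{n}^{(\delta)}(v)}{\sin u\sin w}+\frac{\Phi_{n}^{(\delta)}(w)}{\sin u\sin v}\right],
\]
where $\Phi_{n}^{(\delta)}(\zeta) := \frac{1}{A_{n}^{\delta}}\sum_{k=0}^{n} A_{n-k}^{\delta-1}\cos((2k+1)\zeta)$ is precisely the one-dimensional average controlled by Lemma~1 (and trivially by $1$).

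By the three-fold symmetry of this formula it suffices to estimate $\int_{\Omega}|\Phi_{n}^{(\delta)}(u)|/|\sin v\sin w|\,d\textbf{t}$, and I would decompose $\Omega = \Omega_{0}\cup\Omega_{1}\cup\Omega_{2}$, where $\Omega_{0} := \{|\sin u|,|\sin v|,|\sin w|\geq 1/(n+1)\}$, $\Omega_{1}$ is the union of three thin strips around the bad lines $\sin\zeta=0$ in which exactly one sine is small, and $\Omega_{2}$ consists of the residual corner regions where at least two sines are simultaneously small. On $\Omega_{0}$, substitute the Lemma~1 bound for $\Phi_{n}^{(\delta)}(u)$ and use the partial-fraction identity $\frac{1}{\sin v\sin(u+v)} = \frac{\cot v-\cot(u+v)}{\sin u}$ (valid since $w=-u-v$) to reduce each resulting integral to a one-dimensional logarithmic integral of order $\log(n+1)$.

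On the strip $\{|\sin w|<1/(n+1)\}\subset\Omega_{1}$ the individual term $|\Phi_{n}^{(\delta)}(u)|/|\sin v\sin w|$ is not locally integrable, so one must exploit cancellation: since $\sin\zeta\,\Phi_{n}^{(\delta)}(\zeta)$ is odd in $\zeta$ and $v=-u$ along $\{\sin w=0\}$, the full sum $\sum_{\zeta}\sin\zeta\,\Phi_{n}^{(\delta)}(\zeta)$ vanishes identically on this line. A first-order Taylor estimate transverse to the strip, using $|(\sin\zeta\,\Phi_{n}^{(\delta)}(\zeta))'|\lesssim 1 + (n+1)|\sin\zeta|$, yields $|K_{n}^{(\delta)}(\textbf{t})|\lesssim|\sin u|^{-2}+(n+1)|\sin u|^{-1}$ on the portion of the strip with $|\sin u|\geq 1/(n+1)$; integrating this against the strip width $\sim 1/(n+1)$ contributes $\lesssim\log(n+1)$. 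The region $\Omega_{2}$, of area $\lesssim 1/(n+1)^{2}$, is absorbed by the crude bound $|K_{n}^{(\delta)}|\lesssim(n+1)^{2}$. The main obstacle is this strip step: the factor $n+1$ coming from differentiating $\Phi_{n}^{(\delta)}$ must be offset exactly by the $1/(n+1)$ strip width, and the secondary $|\sin u|^{-2}$ singularity from the Taylor bound must be integrated across the strip in just the right way to yield $\log(n+1)$ and not a positive power of $n+1$.
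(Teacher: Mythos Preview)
Your argument is correct and in large part parallel to the paper's: both obtain the decomposition $K_n^{(\delta)}=-\tfrac12\sum_\zeta\Phi_n^{(\delta)}(\zeta)/(\sin\zeta'\sin\zeta'')$ from the identity $\sin 2\alpha+\sin 2\beta+\sin 2\gamma=-4\sin\alpha\sin\beta\sin\gamma$ for $\alpha+\beta+\gamma=0$ (this is exactly the paper's $H_{k,j}^*$ form), apply Lemma~1 on the bulk region where all three sines exceed $1/(n+1)$, and use the crude $(n+1)^2$ bound on the small region near the origin. The genuine difference is the treatment of the strip $\{|\sin w|<1/(n+1)\}$. The paper introduces a second decomposition $D_k^*=\sum_j D_{k,j}^*$, obtained by telescoping $\sin((k+1)\zeta)-\sin(k\zeta)=2\sin(\zeta/2)\cos\bigl((k+\tfrac12)\zeta\bigr)$ one factor at a time so that each $D_{k,j}^*$ is individually nonsingular; it uses this form on the innermost sub-strip of width $1/(n+1)^2$ and the $H^*$ form term by term (with no cancellation between the three pieces) on the remainder of the strip. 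You instead keep the single $H^*$-type form on the whole strip and recover the same cancellation through the vanishing of $N=\sum_\zeta\sin\zeta\,\Phi_n^{(\delta)}(\zeta)$ on $\{w=0\}$ together with a first-order Taylor bound on $N$. Both routes exploit the same underlying cancellation in $\Theta_k-\Theta_{k-1}$; the paper's is more computational but avoids differentiating $\Phi_n^{(\delta)}$, while yours handles the strip uniformly and makes the mechanism more transparent. One minor correction: in your strip computation it is the $(n+1)|\sin u|^{-1}$ term that produces the $\log(n+1)$ after multiplication by the strip width $1/(n+1)$, whereas the $|\sin u|^{-2}$ term contributes only $O(1)$.
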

\begin{proof}
\begin{equation}
    \int_{\Omega}\left |K^{(\delta)}_{n}(\textbf{t})\right |d\textbf{t}=\frac{1}{ A^{\delta}_{n}} \int_{\Omega}\left |\sum_{k=0}^{n}A^{\delta-1}_{n-k}\left (\Theta_{k}(\textbf{t})-\Theta_{k-1}(\textbf{t})\right ) \right |d\textbf{t}.\notag
\end{equation}
Since the function
\begin{equation}
    \textbf{t}\rightarrow \left |\sum_{k=0}^{n}A^{\delta-1}_{n-k}\left (\Theta_{k}(\textbf{t})-\Theta_{k-1}(\textbf{t})\right ) \right |\notag
\end{equation}
is symmetric with respect to variables $t_{1},t_{2}$ and $t_{3}$, where $\textbf{t}=(t_{1},t_{2},t_{3})\in \Omega$, it is sufficient to estimate the integral over the triangle
\begin{align}
    \Delta:&=\lbrace \textbf{t}=(t_{1},t_{2},t_{3})\in \mathbb{R}_{H}^{3}:0\leq t_{1},t_{2},-t_{3}\leq 1\rbrace \notag\\
    &=\lbrace (t_{1},t_{2}):t_{1}\geq 0,t_{2}\geq 0,t_{1}+t_{2}\leq 1\rbrace, \notag
\end{align}
which is one of the six equilateral triangles in $\overline{\Omega}$. By considering (\ref{theta}),
\begin{align}
    & \frac{1}{ A^{\delta}_{n}} \int_{\Delta}\left |\sum_{k=0}^{n}A^{\delta-1}_{n-k}\left (\Theta_{k}(\textbf{t})-\Theta_{k-1}(\textbf{t})\right ) \right |d\textbf{t}\notag\\
     &=\frac{1}{ A^{\delta}_{n}} \int_{\Delta}\left |\sum_{k=0}^{n}A^{\delta-1}_{n-k}
     \begin{pmatrix}
      \frac{\sin \frac{(k+1)(t_{1}-t_{2})\pi}{3}\sin \frac{(k+1)(t_{2}-t_{3})\pi}{3}\sin \frac{(k+1)(t_{3}-t_{1})\pi}{3}}{\sin \frac{(t_{1}-t_{2})\pi}{3}\sin \frac{(t_{2}-t_{3})\pi}{3}\sin \frac{(t_{3}-t_{1})\pi}{3}} \\-\frac{\sin \frac{k(t_{1}-t_{2})\pi}{3}\sin \frac{k(t_{2}-t_{3})\pi}{3}\sin \frac{(k(t_{3}-t_{1})\pi}{3}}{\sin \frac{(t_{1}-t_{2})\pi}{3}\sin \frac{(t_{2}-t_{3})\pi}{3}\sin \frac{(t_{3}-t_{1})\pi}{3}}
      \end{pmatrix}\right|d\textbf{t}.\notag
\end{align}
If we use the change of variables
\begin{equation}\label{transform1}
    s_{1}:=\frac{t_{1}-t_{3}}{3}=\frac{2t_{1}+t_{2}}{3},  s_{2}:=\frac{t_{2}-t_{3}}{3}=\frac{t_{1}+2t_{2}}{3}
\end{equation}
the integral becomes
\begin{equation}
    3\frac{1}{ A^{\delta}_{n}} \int_{\tilde {\Delta}}\left |\sum_{k=0}^{n}A^{\delta-1}_{n-k}
     \begin{pmatrix}
      \frac{\sin ((k+1)(s_{1}-s_{2})\pi)\sin ((k+1)s_{2}\pi)\sin ((k+1)(-s_{1})\pi)}{\sin ((s_{1}-s_{2})\pi)\sin (s_{2}\pi)\sin ((-s_{1})\pi)} \\-\frac{\sin (k(s_{1}-s_{2})\pi)\sin (k(s_{2}\pi)\sin (k(-s_{1})\pi)}{\sin ((s_{1}-s_{2})\pi)\sin (s_{2}\pi)\sin ((-s_{1})\pi)}
      \end{pmatrix}\right |ds_{1}ds_{2},\notag
\end{equation}
where $\tilde {\Delta}$ is the image of $\Delta$ in the plane, that is
\begin{equation}
    \tilde {\Delta}:=\lbrace (s_{1},s_{2}):0\leq s_{1}\leq 2s_{2},0\leq s_{2}\leq 2s_{1},s_{1}+s_{2}\leq 1\rbrace.\notag
\end{equation}
Since the integrated function is symmetric with respect to $s_{1}$ and $s_{2}$, estimating the integral over the triangle 
\begin{equation}
    \Delta^{*}:=\lbrace (s_{1},s_{2})\in  \tilde {\Delta} : s_{1}\leq s_{2}\rbrace =\lbrace (s_{1},s_{2}): s_{1}\leq s_{2}\leq 2s_{1}, s_{1}+s_{2}\leq 1\rbrace,\notag
\end{equation}
which is the half of $\tilde {\Delta}$, will be sufficient. The change of variables
\begin{equation}\label{transform2}
    s_{1}:=\frac{u_{1}-u_{2}}{2}, s_{2}:=\frac{u_{1}+u_{2}}{2}
\end{equation}
transforms the triangle $\Delta^{*}$ to the triangle 
\begin{equation}
    \Gamma :=\lbrace (u_{1},u_{2}):0\leq u_{2}\leq \frac{u_{1}}{3}, 0\leq u_{1}\leq 1\rbrace.\notag
\end{equation}
Thus we must estimate
\begin{equation}
    I_{n}^{(\delta)}:=\frac{1}{ A^{\delta}_{n}}\int_{\Gamma}\left |\sum_{k=0}^{n}A^{\delta-1}_{n-k}D_{k}^{*}(u_{1},u_{2})\right |du_{1}du_{2},\notag
\end{equation}
where
\begin{align}
    D_{k}^{*}(u_{1},u_{2}):=& \frac{\sin ((k+1)u_{2}\pi)\sin ((k+1)\frac{u_{1}+u_{2}}{2}\pi) \sin ((k+1)\frac{u_{1}-u_{2}}{2}\pi)}{\sin (u_{2}\pi)\sin (\frac{u_{1}+u_{2}}{2}\pi) \sin (\frac{u_{1}-u_{2}}{2}\pi)} \notag \\
    &-\frac{\sin (ku_{2}\pi)\sin (k\frac{u_{1}+u_{2}}{2}\pi) \sin (k\frac{u_{1}-u_{2}}{2}\pi)}{\sin (u_{2}\pi)\sin (\frac{u_{1}+u_{2}}{2}\pi) \sin (\frac{u_{1}-u_{2}}{2}\pi)}.\notag
\end{align}
Using elementary trigonometric identities yields
\begin{equation}
     D_{k}^{*}(u_{1},u_{2})= D_{k,1}^{*}(u_{1},u_{2})+ D_{k,2}^{*}(u_{1},u_{2})+ D_{k,3}^{*}(u_{1},u_{2}),\notag
\end{equation}
where
\begin{align}
    & D_{k,1}^{*}(u_{1},u_{2})=2\cos \left (\left (k+\frac{1}{2}\right )u_{2}\pi\right )\frac{\sin (\frac{1}{2}u_{2}\pi)\sin ((k+1)\frac{u_{1}+u_{2}}{2}\pi) \sin ((k+1)\frac{u_{1}-u_{2}}{2}\pi)}{\sin (u_{2}\pi)\sin (\frac{u_{1}+u_{2}}{2}\pi) \sin (\frac{u_{1}-u_{2}}{2}\pi)}, \notag\\
    &D_{k,2}^{*}(u_{1},u_{2})=2\cos \left (\left (k+\frac{1}{2}\right )\frac{u_{1}+u_{2}}{2}\pi\right )\frac{\sin (ku_{2}\pi)\sin (\frac{1}{2}\frac{u_{1}+u_{2}}{2}\pi) \sin ((k+1)\frac{u_{1}-u_{2}}{2}\pi)}{\sin (u_{2}\pi)\sin (\frac{u_{1}+u_{2}}{2}\pi) \sin (\frac{u_{1}-u_{2}}{2}\pi)}, \notag\\
    &D_{k,3}^{*}(u_{1},u_{2})=2\cos \left (\left (k+\frac{1}{2}\right )\frac{u_{1}-u_{2}}{2}\pi\right )\frac{\sin (ku_{2}\pi)\sin (k\frac{u_{1}+u_{2}}{2}\pi) \sin (\frac{1}{2}\frac{u_{1}-u_{2}}{2}\pi)}{\sin (u_{2}\pi)\sin (\frac{u_{1}+u_{2}}{2}\pi) \sin (\frac{u_{1}-u_{2}}{2}\pi)}. \notag
\end{align}
We can write $\Gamma = \Gamma_{1}\cup \Gamma_{2}\cup \Gamma_{3},$ where
\begin{align}
    &\Gamma_{1}:=\lbrace (u_{1},u_{2})\in \Gamma :u_{1}\leq \frac{1}{n+1}\rbrace,\notag\\
    &\Gamma_{2}:=\lbrace (u_{1},u_{2})\in \Gamma :u_{1}\geq \frac{1}{n+1}, u_{2}\leq \frac{1}{3(n+1)}\rbrace,\notag\\
    &\Gamma_{3}:=\lbrace (u_{1},u_{2})\in \Gamma :u_{1}\geq \frac{1}{n+1}, u_{2}\geq \frac{1}{3(n+1)}\rbrace.\notag
\end{align}
Hence we have $I_{n}^{(\delta)}=I_{n,1}^{(\delta)}+I_{n,2}^{(\delta)}+I_{n,3}^{(\delta)}$, where
\begin{equation}
    I_{n,j}^{(\delta)}:=\frac{1}{ A^{\delta}_{n}}\int_{\Gamma_{j}}\left |\sum_{k=0}^{n}A^{\delta-1}_{n-k}D_{k}^{*}(u_{1},u_{2})\right |du_{1}du_{2}\hspace{3mm}(j=1,2,3).\notag
\end{equation}
We shall use well known inequalities
\begin{equation}\label{sin}
    \left |\frac{\sin (nt)}{\sin t}\right |\leq n \hspace{3mm}(n\in \mathbb{N})
\end{equation}
and
\begin{equation}\label{jordan}
    \sin t\geq \frac{2}{\pi}t\hspace{3mm}\left (0\leq t\leq \frac{\pi}{2}\right )
\end{equation}
to estimate the integrals $I_{n,j}^{(\delta)}\hspace{1.5mm}(j=1,2,3).$
Simple calculations give 
\begin{equation}
    \frac{\sin \left (\frac{u_{2}\pi}{2}\right )}{\sin (u_{2}\pi)}\leq 1, \frac{\sin \left (\frac{1}{2}\frac{u_{1}+u_{2}}{2}\pi\right )}{\sin \left (\frac{u_{1}+u_{2}}{2}\pi\right )}\leq 1, \frac{\sin \left (\frac{1}{2}\frac{u_{1}-u_{2}}{2}\pi\right )}{\sin \left (\frac{u_{1}-u_{2}}{2}\pi\right )}\leq 1\notag
\end{equation}
for $(u_{1},u_{2})\in \Gamma.$  By considering these inequalities and (\ref{sin}) we get
\begin{equation}\label{dk}
    \left |D_{k,1}^{*}(u_{1},u_{2})\right |\leq (k+1)^{2},  \left |D_{k,2}^{*}(u_{1},u_{2})\right |\leq k(k+1),  \left |D_{k,3}^{*}(u_{1},u_{2})\right |\leq k^{2}
\end{equation}
for $(u_{1},u_{2})\in \Gamma_{1}.$ Hence,
\begin{align}
     I_{n,1}^{(\delta)}&\leq\frac{1}{ A^{\delta}_{n}}\sum_{k=0}^{n}A^{\delta-1}_{n-k}(k+1)^{2}\int_{\Gamma_{1}}du_{1}du_{2}\notag\\
     &=\frac{1}{ A^{\delta}_{n}}\sum_{k=0}^{n}A^{\delta-1}_{n-k}(k+1)^{2}\frac{1}{6(n+1)^{2}}\leq \frac{1}{6}\leq 1,
\end{align}
since (\cite[page 77]{zygmund})
\begin{equation}
\sum_{k=0}^{n}A^{\delta-1}_{n-k}=A^{\delta}_{n}.
\end{equation}
We can write the rectangle $\Gamma_{2}$ as the union of rectangles 
\begin{align}
    &\Gamma_{2}^{'}=\lbrace (u_{1},u_{2})\in \Gamma_{2}:u_{2}\leq \frac{1}{(n+1)^{2}}\rbrace\notag \\
    &\Gamma_{2}^{''}=\lbrace (u_{1},u_{2})\in \Gamma_{2}:u_{2}\geq \frac{1}{(n+1)^{2}}\rbrace.\notag 
\end{align}
It is easy to see that 
\begin{equation}\label{sinsin}
    \sin \left (\frac{u_{1}+u_{2}}{2}\pi\right )\geq \frac{\sqrt{3}}{2}\sin \left (\frac{u_{1}\pi}{2}\right ) \hspace{3mm}\text{and}\hspace{3mm}\sin \left (\frac{u_{1}-u_{2}}{2}\pi\right )\geq \sin \left (\frac{u_{1}\pi}{3}\right ) 
\end{equation}
for $(u_{1},u_{2})\in \Gamma.$ Considering these inequalities and (\ref{jordan}) yields
\begin{equation}\label{dk2}
    \left |D_{k,1}^{*}(u_{1},u_{2})\right |\lesssim \frac{1}{u_{1}^{2}} \hspace{3mm}\text{and}\hspace{3mm} \left |D_{k,j}^{*}(u_{1},u_{2})\right |\lesssim \frac{k}{u_{1}} \hspace{3mm}(j=2,3)
\end{equation}
for $(u_{1},u_{2})\in \Gamma_{2}^{'}.$ Hence
\begin{align}
    &\frac{1}{ A^{\delta}_{n}}\int_{\Gamma_{2}^{'}}\left |\sum_{k=0}^{n}A^{\delta-1}_{n-k}D_{k,1}^{*}(u_{1},u_{2})\right |du_{1}du_{2}\notag \\
    &\leq \frac{1}{ A^{\delta}_{n}}\sum_{k=0}^{n}A^{\delta-1}_{n-k}\left (\int_{\Gamma_{2}^{'}}\left |D_{k,1}^{*}(u_{1},u_{2})\right |du_{1}du_{2}\right)\notag\\
    &\lesssim \frac{1}{ A^{\delta}_{n}}\sum_{k=0}^{n}A^{\delta-1}_{n-k}\left (\int_{\Gamma_{2}^{'}}\frac{1}{u_{1}^{2}}du_{1}du_{2}\right)\notag\\
    &=\frac{1}{ A^{\delta}_{n}}\sum_{k=0}^{n}A^{\delta-1}_{n-k}\left (\int_{0}^{\frac{1}{3(n+1)^2}} \int_{\frac{1}{n+1}}^{1}\frac{1}{u_{1}^{2}}du_{1}du_{2}\right)\notag\\
    &\leq \frac{1}{n+1},\notag
\end{align}
and for $j=2,3$
\begin{align}
     &\frac{1}{ A^{\delta}_{n}}\int_{\Gamma_{2}^{'}}\left |\sum_{k=0}^{n}A^{\delta-1}_{n-k}D_{k,j}^{*}(u_{1},u_{2})\right |du_{1}du_{2}\notag \\
     &\lesssim \frac{1}{ A^{\delta}_{n}}\sum_{k=0}^{n}A^{\delta-1}_{n-k}k\left (\int_{\Gamma_{2}^{'}}\frac{1}{u_{1}}du_{1}du_{2}\right)\notag\\
     &\leq \frac{n}{ A^{\delta}_{n}}\sum_{k=0}^{n}A^{\delta-1}_{n-k}\left (\int_{0}^{\frac{1}{3(n+1)^2}} \int_{\frac{1}{n+1}}^{1}\frac{1}{u_{1}}du_{1}du_{2}\right)\notag\\
     &=\frac{n}{3(n+1)^{2}}\log (n+1)\notag \\
     &\leq \frac{\log (n+1)}{n+1}.\notag
\end{align}
For $(u_{1},u_{2})\in \Gamma_{2}^{''}\cup \Gamma_{3}$ we need another expression of the function $D_{k}^{*}.$ Since
\begin{equation}
    \sin (2x)+\sin (2y)+\sin (2z)=-4\sin x\sin y\sin z\notag
\end{equation}
for $x+y+z=0$, we have
\begin{equation}
    D_{k}^{*}(u_{1},u_{2})=H_{k,1}^{*}(u_{1},u_{2})+H_{k,2}^{*}(u_{1},u_{2})+H_{k,3}^{*}(u_{1},u_{2}),\notag
\end{equation}
where
\begin{align}
    &H_{k,1}^{*}(u_{1},u_{2}):=\frac{1}{2}\frac{\cos ((2k+1)u_{2}\pi)}{\sin \left(\frac{u_{1}+u_{2}}{2}\pi\right ) \sin \left (\frac{u_{1}-u_{2}}{2}\pi\right)}\notag\\
    &H_{k,2}^{*}(u_{1},u_{2}):=-\frac{1}{2}\frac{\cos \left ((2k+1)\frac{u_{1}+u_{2}}{2}\pi\right)}{\sin (u_{2}\pi ) \sin \left (\frac{u_{1}-u_{2}}{2}\pi\right)}\notag\\
    &H_{k,3}^{*}(u_{1},u_{2}):=\frac{1}{2}\frac{\cos \left ((2k+1)\frac{u_{1}-u_{2}}{2}\pi\right)}{\sin (u_{2}\pi ) \sin \left (\frac{u_{1}+u_{2}}{2}\pi\right)}.\notag
\end{align}
By (\ref{jordan}) and (\ref{cos}) we have
\begin{align}
   \left |\frac{1}{ A^{\delta}_{n}} \sum_{k=0}^{n}A^{\delta-1}_{n-k}H_{k,1}^{*}(u_{1},u_{2})\right | &\lesssim \frac{1}{u_{1}^{2}}  \left |\frac{1}{ A^{\delta}_{n}} \sum_{k=0}^{n}A^{\delta-1}_{n-k} \cos ((2k+1)u_{2}\pi)\right |\notag\\
   &\lesssim \frac{1}{u_{1}^{2}}\left (\frac{1}{(n+1)^{\delta}(\sin (u_{2}\pi))^{\delta}}+\frac{1}{(n+1)\sin(u_{2}\pi)}\right )\notag\\
   &\leq \frac{1}{u_{1}^{2}}\left ( \frac{1}{(n+1)^{\delta}u_{2}^{\delta}}+\frac{1}{(n+1)u_{2}}\right ).\notag
\end{align}
Since $(n+1)u_{2}<1$ for $(u_{1},u_{2})\in \Gamma_{2}^{''}$, 
\begin{equation}\label{Hk0}
 \left |\frac{1}{ A^{\delta}_{n}} \sum_{k=0}^{n}A^{\delta-1}_{n-k}H_{k,1}^{*}(u_{1},u_{2})\right | \lesssim \frac{1}{u_{1}^{2}}\frac{1}{(n+1)u_{2}}, \hspace{3mm}(u_{1},u_{2})\in \Gamma_{2}^{''},
\end{equation}
and since  $3(n+1)u_{2}\geq 1$ for $(u_{1},u_{2})\in \Gamma_{3}$, we get
\begin{equation}\label{Hk1}
 \left |\frac{1}{ A^{\delta}_{n}} \sum_{k=0}^{n}A^{\delta-1}_{n-k}H_{k,1}^{*}(u_{1},u_{2})\right | \lesssim \frac{1}{u_{1}^{2}}\frac{1}{(n+1)^{\delta}u_{2}^{\delta}}, \hspace{3mm}(u_{1},u_{2})\in \Gamma_{3}.
\end{equation}
By similar way we obtain
\begin{equation}\label{Hk23}
 \left |\frac{1}{ A^{\delta}_{n}} \sum_{k=0}^{n}A^{\delta-1}_{n-k}H_{k,j}^{*}(u_{1},u_{2})\right | \lesssim \frac{1}{u_{1}^{\delta +1}}\frac{1}{(n+1)^{\delta}u_{2}}, \hspace{3mm}(u_{1},u_{2})\in \Gamma_{2}^{''}\cup \Gamma_{3},
\end{equation}
for $j=2,3.$
By (\ref{Hk1}),
\begin{align}
    \frac{1}{A_{n}^{\delta}}\int_{\Gamma_{2}^{''}}\left |\sum_{k=0}^{n}A^{\delta-1}_{n-k}H_{k,1}^{*}(u_{1},u_{2})\right |du_{1}du_{2}&=\int_{\Gamma_{2}^{''}}\left |\frac{1}{A_{n}^{\delta}}\sum_{k=0}^{n}A^{\delta-1}_{n-k}H_{k,1}^{*}(u_{1},u_{2})\right |du_{1}du_{2}\notag\\
    &\lesssim \frac{1}{n+1}\int_{\frac{1}{3(n+1)^{2}}}^{\frac{1}{3(n+1)}}\int_{\frac{1}{n+1}}^{1}\frac{1}{u_{1}^{2}}\frac{1}{u_{2}}du_{1}du_{2}\notag\\
    &=\frac{n}{n+1}\log (n+1)\leq \log (n+1).\notag
\end{align}
By (\ref{Hk23}) we get
\begin{align}
     \frac{1}{A_{n}^{\delta}}\int_{\Gamma_{2}^{''}}\left |\sum_{k=0}^{n}A^{\delta-1}_{n-k}H_{k,j}^{*}(u_{1},u_{2})\right |du_{1}du_{2}&=\int_{\Gamma_{2}^{''}}\left |\frac{1}{A_{n}^{\delta}}\sum_{k=0}^{n}A^{\delta-1}_{n-k}H_{k,j}^{*}(u_{1},u_{2})\right |du_{1}du_{2}\notag\\
     &\lesssim \frac{1}{(n+1)^{\delta}}\int_{\frac{1}{3(n+1)^{2}}}^{\frac{1}{3(n+1)}}\int_{\frac{1}{n+1}}^{1}\frac{1}{u_{1}^{\delta +1}}\frac{1}{u_{2}}du_{1}du_{2}\notag\\
     &=\frac{\log (n+1)}{(n+1)^{\delta}}\left ((n+1)^{\delta}-1\right )\notag \\
     &\leq \log (n+1)\notag
\end{align}
for $j=2,3.$ Combining these estimates yields
\begin{equation}
    I_{n,2}^{(\delta)}\lesssim \log (n+1).\notag
\end{equation}
By (\ref{Hk1})
\begin{align}
     \frac{1}{A_{n}^{\delta}}\int_{\Gamma_{3}}\left |\sum_{k=0}^{n}A^{\delta-1}_{n-k}H_{k,1}^{*}(u_{1},u_{2})\right |du_{1}du_{2}&=\int_{\Gamma_{3}}\left |\frac{1}{A_{n}^{\delta}}\sum_{k=0}^{n}A^{\delta-1}_{n-k}H_{k,1}^{*}(u_{1},u_{2})\right |du_{1}du_{2}\notag\\
     &\lesssim \frac{1}{(n+1)^{\delta}}\int_{\frac{1}{3(n+1)}}^{\frac{1}{3}}\int_{3u_{2}}^{1}\frac{1}{u_{1}^{2}}\frac{1}{u_{2}^{\delta}}du_{1}du_{2}\notag\\
     &=\frac{1}{(n+1)^{\delta}}\frac{3^{\delta}}{\delta}\left ((n+1)^{\delta}-1\right )\lesssim 1,\notag
\end{align}
and for $j=2,3,$ (\ref{Hk23}) gives
\begin{align}
    \frac{1}{A_{n}^{\delta}}\int_{\Gamma_{3}}\left |\sum_{k=0}^{n}A^{\delta-1}_{n-k}H_{k,j}^{*}(u_{1},u_{2})\right |du_{1}du_{2}&=\int_{\Gamma_{3}}\left |\frac{1}{A_{n}^{\delta}}\sum_{k=0}^{n}A^{\delta-1}_{n-k}H_{k,j}^{*}(u_{1},u_{2})\right |du_{1}du_{2}\notag\\
    &\lesssim \frac{1}{(n+1)^{\delta}}\int_{\frac{1}{n+1}}^{1}\int_{\frac{1}{3(n+1)}}^{\frac{u_{1}}{3}}\frac{1}{u_{1}^{\delta +1}}\frac{1}{u_{2}}du_{2}du_{1}\notag \\
    &=\frac{1}{A_{n}^{\delta}}\int_{\frac{1}{n+1}}^{1}\log \left ((n+1)u_{1}\right )\frac{1}{u_{1}^{\delta +1}}du_{1}\notag \\
    &\leq \frac{\log (n+1)}{(n+1)^{\delta}}\int_{\frac{1}{n+1}}^{1}\frac{1}{u_{1}^{\delta +1}}du_{1}\notag\\
    &=\frac{\log (n+1)}{(n+1)^{\delta}}\frac{1}{\delta}\left ((n+1)^{\delta}-1\right )\lesssim \log (n+1).\notag
\end{align}
Thus we have
\begin{equation}
     I_{n,3}^{(\delta)}\lesssim \log (n+1),\notag
\end{equation}
and hence (\ref{Kndelta}) follows.
\end{proof}

\begin{theorem}
    The estimate
    \begin{equation}\label{dndelta}
        \int_{\Omega}\left \|\textbf{t}\right \|\left |K^{(\delta)}_{n}(\textbf{t})\right |d\textbf{t}\lesssim
        \frac{\log (n+1)}{(n+1)^{\delta}}
    \end{equation}
    holds for $0<\delta <1$.
\end{theorem}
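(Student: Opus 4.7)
\bigskip

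\noindent\textbf{Proof plan for the weighted kernel bound.}

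The plan is to run exactly the dissection used in the proof of Theorem 3 (the unweighted $L^{1}$ bound for $K_{n}^{(\delta)}$), but to carry the extra weight $\|\mathbf{t}\|$ through every step. Since $\|\mathbf{t}\|=\max\{|t_{1}|,|t_{2}|,|t_{3}|\}$ is invariant under permutation of the three homogeneous coordinates, the integrand is still symmetric in $t_{1},t_{2},t_{3}$, so it suffices to integrate over the triangle $\Delta$, apply the changes of variables (\ref{transform1}) and (\ref{transform2}), and reduce once more by symmetry to the triangle $\Gamma$. The key observation is that on $\Gamma$ one has $s_{1}+s_{2}=-t_{3}=u_{1}$, while $t_{1}=(u_{1}-3u_{2})/2$ and $t_{2}=(u_{1}+3u_{2})/2$; the condition $0\le u_{2}\le u_{1}/3$ forces $0\le t_{1}\le t_{2}\le -t_{3}$, so $\|\mathbf{t}\|=u_{1}$ on $\Gamma$. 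Thus the problem reduces to showing that
\[
J_{n}^{(\delta)}:=\frac{1}{A_{n}^{\delta}}\int_{\Gamma} u_{1}\,\Bigl|\sum_{k=0}^{n}A_{n-k}^{\delta-1}D_{k}^{*}(u_{1},u_{2})\Bigr|\,du_{1}\,du_{2}\lesssim \frac{\log(n+1)}{(n+1)^{\delta}}.
\]

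Next I would split $\Gamma=\Gamma_{1}\cup\Gamma_{2}'\cup\Gamma_{2}''\cup\Gamma_{3}$ exactly as in Theorem 3 and insert the extra factor $u_{1}$ into each of the pieces already estimated there. On $\Gamma_{1}$ one has $u_{1}\le 1/(n+1)$, so the crude bounds (\ref{dk}) combined with $\sum_{k}A_{n-k}^{\delta-1}(k+1)^{2}\lesssim (n+1)^{2}A_{n}^{\delta}$ and $|\Gamma_{1}|\lesssim (n+1)^{-2}$ yield $J_{n,1}^{(\delta)}\lesssim 1/(n+1)$. On $\Gamma_{2}'$ one uses (\ref{dk2}): the factor $u_{1}$ cancels one power of $u_{1}$ in the denominator, producing $\int_{\Gamma_{2}'}u_{1}\cdot u_{1}^{-2}\lesssim \log(n+1)/(n+1)^{2}$ for the $D_{k,1}^{*}$ piece and $\int_{\Gamma_{2}'}u_{1}\cdot ku_{1}^{-1}\lesssim n/(n+1)^{2}$ for $D_{k,j}^{*}$ with $j=2,3$; both are $o(\log(n+1)/(n+1)^{\delta})$.

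The main work is on $\Gamma_{2}''\cup\Gamma_{3}$, where the $H_{k,j}^{*}$ decomposition and the bounds (\ref{Hk0})--(\ref{Hk23}) are essential. Inserting the factor $u_{1}$, the $H_{k,1}^{*}$ contribution on $\Gamma_{2}''$ becomes
\[
\frac{1}{n+1}\int_{1/(3(n+1)^{2})}^{1/(3(n+1))}\frac{du_{2}}{u_{2}}\int_{1/(n+1)}^{1}\frac{du_{1}}{u_{1}}\lesssim \frac{\log^{2}(n+1)}{n+1},
\]
which for $0<\delta<1$ is dominated by $\log(n+1)/(n+1)^{\delta}$. The $H_{k,j}^{*}$ contributions ($j=2,3$) on $\Gamma_{2}''$ give $\int u_{1}\cdot u_{1}^{-\delta-1}du_{1}= \int u_{1}^{-\delta}du_{1}\lesssim 1$ in the $u_{1}$-integral and a $\log(n+1)$ from the $u_{2}$-integral, for a total bound $\log(n+1)/(n+1)^{\delta}$. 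The same weighted estimates on $\Gamma_{3}$ follow the pattern of Theorem 3: the $H_{k,1}^{*}$ piece gives $\int_{3u_{2}}^{1}du_{1}/u_{1}$ (a $\log$), which is absorbed by the convergent integral $\int u_{2}^{-\delta}\log(1/u_{2})du_{2}$ near $u_{2}=0$ (finite because $\delta<1$), producing $1/(n+1)^{\delta}$; the $H_{k,j}^{*}$ piece ($j=2,3$) gives the same $\log(n+1)/(n+1)^{\delta}$ bound as in Theorem 3 since the extra $u_{1}$ factor merely reduces $u_{1}^{-\delta-1}$ to $u_{1}^{-\delta}$.

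The only genuine obstacle is bookkeeping: one must track that (i) $\delta<1$ keeps the $u_{1}^{-\delta}$ integral finite, and (ii) the factor $u_{1}$ is always enough to convert each unweighted bound of order $\log(n+1)$ from Theorem 3 into the desired $\log(n+1)/(n+1)^{\delta}$. Once each of the (at most) seven weighted integrals is dominated by $\log(n+1)/(n+1)^{\delta}$, (\ref{dndelta}) follows by summing.
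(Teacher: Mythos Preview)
Your plan is correct and matches the paper's own proof essentially step for step: the same reduction to $\Gamma$ with $\|\mathbf{t}\|=u_{1}$, the same partition $\Gamma_{1}\cup\Gamma_{2}'\cup\Gamma_{2}''\cup\Gamma_{3}$, the same pointwise bounds (\ref{dk}), (\ref{dk2}), (\ref{Hk0})--(\ref{Hk23}) with the extra factor $u_{1}$ inserted, and the same piecewise estimates (including the $(\log(n+1))^{2}/(n+1)$ bound on $\Gamma_{2}''$ for the $H_{k,1}^{*}$ term, absorbed into $\log(n+1)/(n+1)^{\delta}$ via $\delta<1$). The only cosmetic difference is that on $\Gamma_{3}$ you observe $\int u_{2}^{-\delta}\log(1/u_{2})\,du_{2}<\infty$ to get $O((n+1)^{-\delta})$ for the $H_{k,1}^{*}$ piece, whereas the paper simply bounds $\log(1/(3u_{2}))\le\log(n+1)$ and obtains $\log(n+1)/(n+1)^{\delta}$; either way the final bound is the same.
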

\begin{proof}
    As in proof of Theorem 2, it is sufficient to estimate the integral on the triangle $\Delta.$ By transforms (\ref{transform1}) and (\ref{transform2}) estimating the integral
    \begin{equation}
         \int_{\Delta}\left \|\textbf{t}\right \|\left |K^{(\delta)}_{n}(\textbf{t})\right |d\textbf{t}\notag
  \end{equation}
  is equivalent to estimating the integral
  \begin{equation}
      \int_{\Gamma}u_{1}\left |\frac{1}{A_{n}^{\delta}}\sum_{k=0}^{n}A_{n-k}^{\delta -1}D_{k}^{*}(u_{1},u_{2})\right |du_{1}du_{2}.\notag
  \end{equation}
  We can write
  \begin{equation}
      \int_{\Gamma}u_{1}\left |\frac{1}{A_{n}^{\delta}}\sum_{k=0}^{n}A_{n-k}^{\delta -1}D_{k}^{*}(u_{1},u_{2})\right |du_{1}du_{2}=J_{n,1}^{(\delta)}+J_{n,2}^{(\delta)}+J_{n,3}^{8\delta)},\notag
  \end{equation}
  where
  \begin{equation}
    J_{n,j}^{(\delta)}:=\int_{\Gamma_{j}}u_{1}\left |\frac{1}{A_{n}^{\delta}}\sum_{k=0}^{n}A_{n-k}^{\delta -1}D_{k}^{*}(u_{1},u_{2})\right |du_{1}du_{2},\hspace{3mm}j=1,2,3. \notag
  \end{equation}
  Considering (\ref{dk}) yields
  \begin{align}
      J_{n,1}^{(\delta)}&=\frac{1}{A_{n}^{\delta}}\int_{\Gamma_{1}}u_{1}\left |\sum_{k=0}^{n}A_{n-k}^{\delta -1}D_{k}^{*}(u_{1},u_{2})\right |du_{1}du_{2}\notag \\
      &\lesssim \int_{\Gamma_{1}}u_{1}(n+1)^{2}du_{1}du_{2}=(n+1)^{2}\int_{0}^{\frac{1}{3(n+1)}}\int_{3u_{2}}^{\frac{1}{n+1}} u_{1}du_{1}du_{2}\notag \\
      &=(n+1)^{2}\int_{0}^{\frac{1}{3(n+1)}}\left (\frac{1}{(n+1)^{2}}-(3u_{2})^{2}\right )du_{2}\leq \frac{1}{n+1}.\notag
\end{align}
By (\ref{dk2}),
\begin{align}
    \int_{\Gamma_{2}^{'}}u_{1}\left |\frac{1}{A_{n}^{\delta}}\sum_{k=0}^{n}A_{n-k}^{\delta -1}D_{k,1}^{*}(u_{1},u_{2})\right |du_{1}du_{2}&=\frac{1}{A_{n}^{\delta}}\int_{\Gamma_{2}^{'}}u_{1}\left |\sum_{k=0}^{n}A_{n-k}^{\delta -1}D_{k,1}^{*}(u_{1},u_{2})\right |du_{1}du_{2}\notag \\
    &\lesssim  \int_{\Gamma_{2}^{'}}u_{1}\frac{1}{u_{1}^{2}}du_{1}du_{2}\notag \\
    &=\int_{0}^{\frac{1}{3(n+1)^{2}}}\int_{\frac{1}{n+1}}^{1}\frac{1}{u_{1}}du_{1}du_{2}\notag \\
    &=\log (n+1)\int_{0}^{\frac{1}{3(n+1)^{2}}}du_{2}\notag \\
    &\leq \frac{\log (n+1)}{(n+1)^{2}},\notag
\end{align}
and for $j=2,3,$
\begin{align}
      \int_{\Gamma_{2}^{'}}u_{1}\left |\frac{1}{A_{n}^{\delta}}\sum_{k=0}^{n}A_{n-k}^{\delta -1}D_{k,j}^{*}(u_{1},u_{2})\right |du_{1}du_{2}&=\frac{1}{A_{n}^{\delta}}\int_{\Gamma_{2}^{'}}u_{1}\left |\sum_{k=0}^{n}A_{n-k}^{\delta -1}D_{k,j}^{*}(u_{1},u_{2})\right |du_{1}du_{2}\notag \\
      &\lesssim n\int_{\Gamma_{2}^{'}}u_{1}\frac{1}{u_{1}}du_{1}du_{2}\notag \\
      &=n\int_{0}^{\frac{1}{3(n+1)^{2}}}\int_{\frac{1}{n+1}}^{1}du_{1}du_{2}\leq \frac{1}{n+1}.\notag 
\end{align}
By (\ref{Hk0}),
\begin{align}
     \int_{\Gamma_{2}^{''}}u_{1}\left |\frac{1}{A_{n}^{\delta}}\sum_{k=0}^{n}A_{n-k}^{\delta -1}H_{k,1}^{*}(u_{1},u_{2})\right |du_{1}du_{2}&\lesssim \frac{1}{n+1}\int_{\Gamma_{2}^{''}}u_{1}\frac{1}{u_{1}^{2}u_{2}}du_{1}du_{2} \notag \\
     &=\frac{1}{n+1}\int_{\frac{1}{3(n+1)^{2}}}^{\frac{1}{3(n+1)}}\int_{\frac{1}{n+1}}^{1}\frac{1}{u_{1}u_{2}}du_{1}du_{2}\notag \\
     &=\frac{\left (\log (n+1)\right )^{2}}{n+1},\notag
\end{align}
and by (\ref{Hk23}),
\begin{align}
     \int_{\Gamma_{2}^{''}}u_{1}\left |\frac{1}{A_{n}^{\delta}}\sum_{k=0}^{n}A_{n-k}^{\delta -1}H_{k,j}^{*}(u_{1},u_{2})\right |du_{1}du_{2}&\lesssim \frac{1}{(n+1)^{\delta}}\int_{\Gamma_{2}^{''}}u_{1}\frac{1}{u_{1}^{\delta +1}u_{2}}du_{1}du_{2} \notag \\
     &=\frac{1}{(n+1)^{\delta}}\int_{\frac{1}{3(n+1)^{2}}}^{\frac{1}{3(n+1)}}\int_{\frac{1}{n+1}}^{1}\frac{1}{u_{1}^{\delta}u_{2}}du_{1}du_{2}\notag \\
     &=\frac{\log (n+1)}{(n+1)^{\delta}}\int_{\frac{1}{n+1}}^{1}\frac{1}{u_{1}^{\delta}}du_{1}\notag\\
     &=\frac{\log (n+1)}{(n+1)^{\delta}}\frac{1}{1-\delta}\left (1-\frac{1}{(n+1)^{1-\delta}}\right )\notag \\
     &\lesssim \frac{\log (n+1)}{(n+1)^{\delta}}.\notag
\end{align}
Since
\begin{equation}
    \frac{\left (\log (n+1)\right )^{2}}{n+1}\lesssim \frac{\log (n+1)}{(n+1)^{\delta}}\notag
\end{equation}
we get
\begin{equation}
    J_{n,2}^{(\delta)}\lesssim \frac{\log (n+1)}{(n+1)^{\delta}}.\notag
\end{equation}
By (\ref{Hk1}),
\begin{align}
\int_{\Gamma_{3}}u_{1}\left |\frac{1}{A_{n}^{\delta}}\sum_{k=0}^{n}A_{n-k}^{\delta -1}H_{k,1}^{*}(u_{1},u_{2})\right |du_{1}du_{2}&\lesssim \frac{1}{(n+1)^{\delta}}\int_{\Gamma_{3}}u_{1}\frac{1}{u_{1}^{2}u_{2}^{\delta}}du_{1}du_{2}\notag \\
&=\frac{1}{(n+1)^{\delta}}\int_{\frac{1}{3(n+1)}}^{\frac{1}{3}}\int_{3u_{2}}^{1}\frac{1}{u_{1}u_{2}^{\delta}}du_{1}du_{2}\notag \\
&=\frac{1}{(n+1)^{\delta}}\int_{\frac{1}{3(n+1)}}^{\frac{1}{3}}\log \left (\frac{1}{3u_{2}}\right )\frac{1}{u_{2}^{\delta}}du_{2}\notag \\
&\leq \frac{\log (n+1)}{(n+1)^{\delta}}\int_{\frac{1}{3(n+1)}}^{\frac{1}{3}}\frac{1}{u_{2}^{\delta}}du_{2}\notag \\
&\lesssim \frac{\log (n+1)}{(n+1)^{\delta}},\notag
\end{align}
and by (\ref{Hk23})
\begin{align}
    \int_{\Gamma_{3}}u_{1}\left |\frac{1}{A_{n}^{\delta}}\sum_{k=0}^{n}A_{n-k}^{\delta -1}H_{k,j}^{*}(u_{1},u_{2})\right |du_{1}du_{2}&\lesssim \frac{1}{(n+1)^{\delta}}\int_{\Gamma_{3}}u_{1}\frac{1}{u_{1}^{\delta +1}u_{2}}du_{1}du_{2}\notag \\
    &=\frac{1}{(n+1)^{\delta}}\int_{\frac{1}{n+1}}^{1}\int_{\frac{1}{3(n+1)}}^{\frac{u_{1}}{3}}\frac{1}{u_{1}^{\delta}u_{2}}du_{2}du_{1}\notag \\
    &=\frac{1}{(n+1)^{\delta}}\int_{\frac{1}{n+1}}^{1}\log \left ((n+1)u_{1}\right )\frac{1}{u_{1}^{\delta}}du_{1}\notag \\
    &\leq \frac{\log (n+1)}{(n+1)^{\delta}}\int_{\frac{1}{n+1}}^{1}\frac{1}{u_{1}^{\delta}}du_{1}\notag \\
    &\lesssim \frac{\log (n+1)}{(n+1)^{\delta}}.\notag
\end{align}
Hence we get
\begin{equation}
    J_{n,3}^{(\delta)}\lesssim \frac{\log (n+1)}{(n+1)^{\delta}},\notag
\end{equation}
and so (\ref{dndelta}) holds.
\end{proof} 

In \cite{xu}, Y. Xu proved that if $f\in C(\overline {\Omega}) $ the $(C,1)$ means $S^{(1)}_{n}(f)$ converges uniformly to $f$ on $\overline {\Omega}$. The degree of approximation of $(C,1)$ means of hexagonal Fourier series of functions belong to  $C(\overline {\Omega}) $ was studied in \cite{guven jca}, \cite{guven mia} and \cite{guven cmft}.

The following theorem gives the main estimate for the degree of approximation of $(C,\delta)$ means of hexagonal Fourier series.
\begin{theorem}
    For $f \in C(\overline{\Omega})$ the estimate
    \begin{equation}\label{main}
        \left \|f-S^{(\delta)}_{n}(f)\right \|_{C(\overline {\Omega})}\lesssim \begin{cases}
        \log (n+1) \omega_{f} \left (\frac {\log (n+1)}{(n+1)^{\delta}}\right ) , &0<\delta<1\\
        \ \omega_{f} \left (\frac {(\log (n+1))^{2}}{n+1}\right ),&\delta \geq 1
        \end{cases}
    \end{equation}
    holds.
\end{theorem}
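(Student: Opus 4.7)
The plan is to exploit the convolution representation of $f - S^{(\delta)}_n(f)$ together with the $L^1$ and weighted-$L^1$ bounds on $K^{(\delta)}_n$ already furnished by Theorems 2 and 3. Since $\frac{1}{|\Omega|}\int_\Omega K^{(\delta)}_n(\textbf{s})\,d\textbf{s} = 1$ by (\ref{kernel}), I would write
\begin{equation*}
f(\textbf{t}) - S^{(\delta)}_n(f)(\textbf{t}) = \frac{1}{|\Omega|}\int_\Omega \bigl(f(\textbf{t}) - f(\textbf{t}-\textbf{s})\bigr) K^{(\delta)}_n(\textbf{s})\,d\textbf{s},
\end{equation*}
bound the integrand pointwise by $\omega_f(\|\textbf{s}\|)\,|K^{(\delta)}_n(\textbf{s})|$, and then apply (\ref{mod}) with $\lambda = \|\textbf{s}\|/u$ to obtain, for every $u > 0$,
\begin{equation*}
\|f - S^{(\delta)}_n(f)\|_{C(\overline{\Omega})} \lesssim \omega_f(u)\left[\int_\Omega |K^{(\delta)}_n(\textbf{s})|\,d\textbf{s} + \frac{1}{u}\int_\Omega \|\textbf{s}\|\, |K^{(\delta)}_n(\textbf{s})|\,d\textbf{s}\right].
\end{equation*}

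For $0 < \delta < 1$, Theorems 2 and 3 bound the bracket by $\log(n+1) + (1/u)\log(n+1)/(n+1)^\delta$. Choosing $u := \log(n+1)/(n+1)^\delta$ balances the two summands (the second contributes $O(1)$ and the first $O(\log(n+1))$), which makes the whole bracket $O(\log(n+1))$ and produces the first case of (\ref{main}).

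For $\delta \geq 1$ the same template works provided one has the sharper estimates $\int_\Omega |K^{(\delta)}_n(\textbf{s})|\,d\textbf{s} \lesssim 1$ and $\int_\Omega \|\textbf{s}\|\, |K^{(\delta)}_n(\textbf{s})|\,d\textbf{s} \lesssim (\log(n+1))^2/(n+1)$. For $\delta = 1$ these are essentially contained in \cite{guven jca}, \cite{guven mia} and \cite{guven cmft}; for $\delta > 1$ one would expect them to follow by repeating the block decomposition of the proof of Theorem 2 with the improved decay of $A^{\delta-1}_{n-k}/A^\delta_n$ on the relevant ranges, which erases the logarithmic loss present in the $H^\ast_{k,1}$ piece when $\delta < 1$. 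With these bounds in hand, the choice $u := (\log(n+1))^2/(n+1)$ renders the bracket $O(1)$ and delivers the second case. The principal obstacle is therefore to package the $\delta \geq 1$ kernel bounds cleanly; the $0 < \delta < 1$ case is an immediate consequence of Theorems 2 and 3 combined with the modulus-of-continuity inequality and an optimal choice of $u$.
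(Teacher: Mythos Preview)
Your argument for $0<\delta<1$ is correct and essentially coincides with the paper's: both start from the convolution formula, bound $|f(\textbf{t})-f(\textbf{t}-\textbf{s})|$ by $\omega_f(\|\textbf{s}\|)$, invoke (\ref{mod}), and then plug in Theorems~2 and~3. The paper's only cosmetic difference is that it takes $u=d_n^{(\delta)}:=\frac{1}{|\Omega|}\int_\Omega\|\textbf{t}\|\,|K_n^{(\delta)}(\textbf{t})|\,d\textbf{t}$ exactly, so the second term of the bracket becomes $1$ on the nose; your choice $u=\log(n+1)/(n+1)^\delta$ achieves the same thing up to a constant. For $\delta=1$ you likewise cite the right inputs (the bounds $\int_\Omega|K_n^{(1)}|\lesssim 1$ and $\int_\Omega\|\textbf{t}\|\,|K_n^{(1)}|\lesssim(\log(n+1))^2/(n+1)$ are indeed imported from \cite{xu} and \cite{guven mia}), so that case is also fine.

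The case $\delta>1$ is where your proposal has a genuine gap, and where the paper's route differs from yours. You propose to recover $\int_\Omega|K_n^{(\delta)}|\lesssim 1$ and $\int_\Omega\|\textbf{s}\|\,|K_n^{(\delta)}|\lesssim(\log(n+1))^2/(n+1)$ by ``repeating the block decomposition of the proof of Theorem~2 with the improved decay of $A_{n-k}^{\delta-1}/A_n^\delta$,'' but you do not carry this out, and the machinery of Theorems~2 and~3 does not transfer automatically: Lemma~1 is stated and proved only for $0<\delta<1$, and the Zygmund estimate behind it changes character once $\delta\geq 1$. The paper avoids this entirely. It never touches $K_n^{(\delta)}$ for $\delta>1$; instead it invokes the identity (from \cite{ulyanov})
\[
S_n^{(\delta)}(f)=\frac{1}{A_n^\delta}\sum_{k=0}^n A_{n-k}^{\delta-2}A_k^{1}\,S_k^{(1)}(f),
\]
which expresses $S_n^{(\delta)}(f)$ as a convex combination of $(C,1)$ means (note $A_{n-k}^{\delta-2}>0$ since $\delta-2>-1$). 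Subtracting $f$ and inserting the already established $\delta=1$ estimate $|f-S_k^{(1)}(f)|\lesssim\omega_f\bigl((\log(k+1))^2/(k+1)\bigr)$, one is left with the elementary sum $\frac{1}{A_n^\delta}\sum_{k=0}^n A_{n-k}^{\delta-2}A_k^{1}\,\omega_f\bigl((\log(k+1))^2/(k+1)\bigr)$, which is handled by (\ref{mod}) and the crude bound $(\log(k+1))^2/(k+1)\le\frac{n+1}{k+1}\cdot(\log(n+1))^2/(n+1)$. This reduction-to-$\delta=1$ trick is the missing ingredient in your proposal: it bypasses any direct analysis of the kernel $K_n^{(\delta)}$ for $\delta>1$ and turns the problem into a one-line summation.
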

\begin{proof}
    Let $f \in C(\overline{\Omega}).$ By (\ref{cesarodelta}) and (\ref{kernel}) we have
    \begin{equation}
        f(\textbf{t})-S_{n}^{(\delta)}(f)(\textbf{t})=\frac{1}{\left |\Omega \right |}\int_{\Omega}\left (f(\textbf{t})-f(\textbf{t}-\textbf{s})\right )K_{n}^{(\delta)}(\textbf{s})d\textbf{s}.\notag
    \end{equation}
    If we set
\begin{equation}
    d_{n}^{(\delta)}:= \frac{1}{\left |\Omega \right |}\int_{\Omega}\left \|\textbf{t}\right \|\left |K^{(\delta)}_{n}(\textbf{t})\right |d\textbf{t}\notag,
\end{equation}
we obtain
\begin{align}
    \left | f(\textbf{t})-S_{n}^{(\delta)}(f)(\textbf{t})\right |&\leq \frac{1}{\left |\Omega \right |}\int_{\Omega}\left |f(\textbf{t})-f(\textbf{t}-\textbf{s})\right |\left |K_{n}^{(\delta)}(\textbf{s})\right |d\textbf{s}\notag \\
    &\leq \frac{1}{\left |\Omega \right |}\int_{\Omega}\omega_{f} \left (\left \|\textbf{s}\right \|\right )\left |K_{n}^{(\delta)}(\textbf{s})\right |d\textbf{s}\notag \\
    &= \frac{1}{\left |\Omega \right |}\int_{\Omega}\omega_{f} \left (\frac{\left \|\textbf{s}\right \|}{ d_{n}^{(\delta)}} d_{n}^{(\delta)}\right )\left |K_{n}^{(\delta)}(\textbf{s})\right |d\textbf{s}\notag \\
    &\leq \omega_{f} \left ( d_{n}^{(\delta)}\right ) \frac{1}{\left |\Omega \right |}\int_{\Omega}\left (1+\frac{\left \|\textbf{s}\right \|}{ d_{n}^{(\delta)}}\right )\left |K_{n}^{(\delta)}(\textbf{s})\right |d\textbf{s}\notag \\
    &=\omega_{f} \left ( d_{n}^{(\delta)}\right ) \left (\frac{1}{\left |\Omega \right |}\int_{\Omega}\left |K_{n}^{(\delta)}(\textbf{s})\right |d\textbf{s}+\frac{1}{d_{n}^{\delta}}\frac{1}{\left |\Omega \right |}\int_{\Omega}\left \|\textbf{s}\right \|\left |K^{(\delta)}_{n}(\textbf{s})\right |d\textbf{s}\right )\notag \\
    &=\omega_{f} \left (d_{n}^{(\delta)}\right )\left (1+\frac{1}{\left |\Omega \right |}\int_{\Omega}\left |K_{n}^{(\delta)}(\textbf{s})\right |d\textbf{s}\right ).\notag
\end{align}
Hence, for $0<\delta <1$, (\ref{main}) follows from (\ref{Kndelta}) and (\ref{dndelta}). It follows from \cite{xu} that
\begin{equation}\label{kn1}
   \int_{\Omega}\left |K^{(1)}_{n}(\textbf{t})\right |d\textbf{t}\lesssim 1,
\end{equation}
and from \cite{guven mia}
\begin{equation}
    \int_{\Omega}\left \|\textbf{t}\right \|\left |K^{(1)}_{n}(\textbf{t})\right |d\textbf{t}\lesssim \frac{\left (\log (n+1)\right )^{2}}{n+1}.\notag
\end{equation}
Thus we get (\ref{main}) for $\delta=1$.
For $\delta >1$, we use the equality (see \cite{ulyanov})
\begin{equation}
    S_{n}^{(\delta)}(f)(\textbf{t})=\frac{1}{A_{n}^{\delta}}\sum_{k=0}^{n}A_{n-k}^{\delta-1}S_{k}(f)(\textbf{t})=\frac{1}{A_{n}^{\delta}}\sum_{k=0}^{n}A_{n-k}^{\delta-2}A_{k}^{1}S_{k}^{(1)}(f)(\textbf{t}).\notag
\end{equation}
Thus,
\begin{equation}
    f(\textbf{t})- S_{n}^{(\delta)}(f)(\textbf{t})=\frac{1}{A_{n}^{\delta}}\sum_{k=0}^{n}A_{n-k}^{\delta-2}A_{k}^{1}\left (f(\textbf{t})-S_{k}^{(1)}(f)(\textbf{t})\right ).\notag
\end{equation}
Hence, considering (\ref{main}),
\begin{align}
    \left | f(\textbf{t})- S_{n}^{(\delta)}(f)(\textbf{t})\right |&\leq \frac{1}{A_{n}^{\delta}}\sum_{k=0}^{n}A_{n-k}^{\delta-2}A_{k}^{1}\left |f(\textbf{t})-S_{k}^{(1)}(f)(\textbf{t})\right |\notag \\
    &\lesssim \frac{1}{A_{n}^{\delta}}\sum_{k=0}^{n}A_{n-k}^{\delta-2}A_{k}^{1}\omega_{f} \left (\frac{(\log (k+1))^{2}}{k+1}\right )\notag \\
    &\lesssim \frac{1}{(n+1)^{\delta}}\sum_{k=0}^{n}(n-k+1)^{\delta-2}(k+1)\omega_{f} \left (\frac{(\log (k+1))^{2}}{k+1}\right )\notag \\
    &=\frac{1}{(n+1)^{\delta}}\sum_{k=0}^{n}(n-k+1)^{\delta-2}(k+1)\notag \\
    &\times \omega_{f} \left (\frac{(\log (n+1))^{2}}{n+1}\frac{n+1}{(\log (n+1))^{2}}\frac{(\log (k+1))^{2}}{k+1}\right )\notag \\
    &\leq \frac{1}{(n+1)^{\delta}}\omega_{f} \left (\frac{(\log (n+1))^{2}}{n+1}\right )\sum_{k=0}^{n}(n-k+1)^{\delta-2}(k+1)\notag \\ 
    &\times \left (1+\frac{n+1}{(\log (n+1))^{2}}\frac{(\log (k+1))^{2}}{k+1}\right )\notag \\
    &\leq \frac{1}{(n+1)^{\delta}}\omega_{f} \left (\frac{(\log (n+1))^{2}}{n+1}\right )\sum_{k=0}^{n}(n-k+1)^{\delta-2}(k+1)\left (1+\frac{n+1}{k+1}\right )\notag \\
    &\lesssim \frac{1}{(n+1)^{\delta-1}}\omega_{f} \left (\frac{(\log (n+1))^{2}}{n+1}\right )\sum_{k=0}^{n}(n-k+1)^{\delta-2}\notag \\
    &\leq \omega_{f} \left (\frac{(\log (n+1))^{2}}{n+1}\right ).\notag
\end{align}
\end{proof}
The analogue of (\ref{main}) for classical Fourier series was obtained in \cite{taberski}.

\section{Approximation by Abel-Poisson Means on Hexagonal Domain}
Abel-Poisson means of series (\ref{series}) are defined by
\begin{equation}
    U_{r}(f)(\textbf{t}):=\sum_{k=0}^{\infty}\sum_{\textbf{j}\in \mathbb{J}_{k}}r^{k}\widehat{f}_{\textbf{j}}\phi_{\textbf{j}}(\textbf{t})=\frac{1}{\left |\Omega\right |}\int_{\Omega}f(\textbf{t}-\textbf{s})P_{r}(\textbf{s})d\textbf{s},\hspace{3mm} 0\leq r< 1,\notag
\end{equation}
where
\begin{equation}
    \mathbb{J}_{k}:=\mathbb{H}_{k}\setminus \mathbb{H}_{k-1}\notag
\end{equation}
and 
\begin{equation}
    P_{r}(\textbf{t}):=\sum_{k=0}^{\infty}\sum_{\textbf{j}\in \mathbb{J}_{k}}r^{k}\phi_{\textbf{j}}(\textbf{t})\notag
\end{equation}
is the Poisson kernel. The Poisson kernel is non-negative, satisfies
\begin{equation}\label{poisson}
    \frac{1}{\left |\Omega\right |}\int_{\Omega}P_{r}(\textbf{t})d\textbf{t}=1,
\end{equation}
and has the compact formula
\begin{align}
   P_{r}(\textbf{t})&=\frac{(1-r)^{3}(1-r^{3})}{q_{r}\left (\frac{2\pi (t_{1}-t_{2})}{3}\right ) q_{r}\left (\frac{2\pi (t_{2}-t_{3})}{3}\right )q_{r}\left (\frac{2\pi (t_{3}-t_{1})}{3}\right )} \notag \\
   &+\frac{r(1-r)^{2}}{q_{r}\left (\frac{2\pi (t_{1}-t_{2})}{3}\right )q_{r}\left (\frac{2\pi (t_{2}-t_{3})}{3}\right )}\notag \\
   &+\frac{r(1-r)^{2}}{q_{r}\left (\frac{2\pi (t_{2}-t_{3})}{3}\right )q_{r}\left (\frac{2\pi (t_{3}-t_{1})}{3}\right )}\notag \\
   &+\frac{r(1-r)^{2}}{q_{r}\left (\frac{2\pi (t_{3}-t_{1})}{3}\right )q_{r}\left (\frac{2\pi (t_{1}-t_{2})}{3}\right )}\notag
\end{align}
for $\textbf{t}=(t_{1},t_{2},t_{3})\in \mathbb{R}_{H}^{3},$ where $q_{r}(x)=1-2r\cos x+r^{2}$ (\cite{xu}). Also, it is easy to see that
\begin{align}\label{poisson1}
     P_{r}(\textbf{t})&\leq \frac{2(1-r)^{2}}{q_{r}\left (\frac{2\pi (t_{1}-t_{2})}{3}\right )q_{r}\left (\frac{2\pi (t_{2}-t_{3})}{3}\right )}+\frac{2(1-r)^{2}}{q_{r}\left (\frac{2\pi (t_{2}-t_{3})}{3}\right )q_{r}\left (\frac{2\pi (t_{3}-t_{1})}{3}\right )}\notag \\
     &+\frac{2(1-r)^{2}}{q_{r}\left (\frac{2\pi (t_{3}-t_{1})}{3}\right )q_{r}\left (\frac{2\pi (t_{1}-t_{2})}{3}\right )}
\end{align}
and
\begin{equation}\label{poisson2}
    \frac{(1-r)^{2}}{q_{r}(x)q_{r}(y)}=\frac{1}{(1+r)^{2}}p_{r}(x)p_{r}(y),
\end{equation}
where
\begin{equation}
    p_{r}(x)=\frac{1-r^{2}}{q_{r}(x)}\notag
\end{equation}
is the classical Poisson kernel. The classical Poisson kernel satisfies the inequalities (\cite [pp. 96-97]{zygmund})
\begin{equation}\label{poisson3}
    p_{r}(x)\leq \frac{2}{1-r},\hspace{3mm}0\leq x\leq \pi
\end{equation}
and
\begin{equation}\label{poisson4}
    p_{r}(x)\lesssim \frac{1-r}{x^{2}},\hspace{3mm}0< x\leq \pi.
\end{equation}

We have the following estimate for Abel-Poisson means of hexagonal Fourier series of $H-$periodic continuous functions.

\begin{theorem}
    For $f \in C(\overline{\Omega})$ the estimate
    \begin{equation}\label{main2}
        \left \|f-U_{r}(f)\right \|_{C(\overline {\Omega})}\lesssim \omega_{f}\left ((1-r)\left |\log (1-r)\right | \right )
        \end{equation}
        holds as $r\rightarrow 1^{-}.$
\end{theorem}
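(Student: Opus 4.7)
The plan is to mimic the proof of Theorem 4. First I would use the convolution representation together with (\ref{poisson}) to write
\begin{equation}
f(\textbf{t})-U_{r}(f)(\textbf{t})=\frac{1}{\left|\Omega\right|}\int_{\Omega}\left(f(\textbf{t})-f(\textbf{t}-\textbf{s})\right)P_{r}(\textbf{s})d\textbf{s},\notag
\end{equation}
and bound the integrand by $\omega_{f}(\|\textbf{s}\|)P_{r}(\textbf{s})$ (using $P_{r}\geq 0$, so no absolute value is needed on the kernel). Setting
\begin{equation}
e_{r}:=\frac{1}{\left|\Omega\right|}\int_{\Omega}\|\textbf{t}\|P_{r}(\textbf{t})d\textbf{t},\notag
\end{equation}
I would apply the same trick as in the proof of Theorem 4: split $\|\textbf{s}\|=(\|\textbf{s}\|/e_{r})\cdot e_{r}$ and use (\ref{mod}) with (\ref{poisson}) to conclude $\|f-U_{r}(f)\|_{C(\overline{\Omega})}\leq 2\,\omega_{f}(e_{r})$. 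The theorem thus reduces to proving $e_{r}\lesssim (1-r)|\log(1-r)|$ as $r\to 1^{-}$.

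To obtain this estimate, I would follow the pattern of Theorems 2 and 3. By the symmetry of $P_{r}$ with respect to the variables $t_{1},t_{2},t_{3}$ it suffices to integrate over the triangle $\Delta$, and applying the changes of variables (\ref{transform1}) and (\ref{transform2}) reduces the problem to estimating
\begin{equation}
\int_{\Gamma}u_{1}\,P_{r}^{\sharp}(u_{1},u_{2})\,du_{1}\,du_{2},\notag
\end{equation}
where $P_{r}^{\sharp}$ denotes the Poisson kernel expressed in the coordinates $(u_{1},u_{2})$. By (\ref{poisson1}) and (\ref{poisson2}), $P_{r}^{\sharp}(u_{1},u_{2})$ is dominated by a sum of three products of classical one-dimensional Poisson kernels $p_{r}(\cdot)$, each product using two of the three arguments $u_{2}\pi$, $(u_{1}+u_{2})\pi/2$ and $(u_{1}-u_{2})\pi/2$.

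Next I would split $\Gamma$ into pieces analogous to $\Gamma_{1}$, $\Gamma_{2}$, $\Gamma_{3}$ from the proof of Theorem 2, with $1-r$ playing the role of $1/(n+1)$, and if necessary refine $\Gamma_{2}$ as $\Gamma_{2}'\cup\Gamma_{2}''$ according to the size of $u_{2}$. On the innermost region each $p_{r}$ factor is $\lesssim 1/(1-r)$ by (\ref{poisson3}), and the smallness of $u_{1}$ together with the small area produces an $O(1-r)$ contribution. On the outer regions I would apply (\ref{poisson4}) to the factor whose argument is bounded below by a constant multiple of $u_{1}$ or $u_{2}$, yielding denominators $u_{1}^{2}$ or $u_{2}^{2}$, and then use (\ref{poisson3}) or (\ref{poisson4}) for the remaining factor depending on whether its argument is small or large relative to $1-r$. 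The decisive contribution arises from integrating a quantity of the form $u_{1}\cdot (1-r)/u_{1}^{2}$ over $u_{1}\in[1-r,1]$, which is precisely what produces the factor $(1-r)|\log(1-r)|$.

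The main obstacle is the case bookkeeping over the subregions of $\Gamma$ and the three summands of $P_{r}^{\sharp}$. However, because $P_{r}\geq 0$ there is no need for an oscillation-cancellation estimate, so no analogue of Lemma 1 is required; only the pointwise bounds (\ref{poisson3}) and (\ref{poisson4}) for the classical Poisson kernel enter the computation, which makes the analysis more direct than that of Theorems 2 and 3.
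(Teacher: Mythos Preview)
Your proposal is correct and follows essentially the same route as the paper: reduce to the bound $e_{r}\lesssim (1-r)\lvert\log(1-r)\rvert$, pass by symmetry to $\Delta$ and then to $\Gamma$ via (\ref{transform1})--(\ref{transform2}), dominate $P_{r}$ by products of classical Poisson kernels using (\ref{poisson1})--(\ref{poisson2}), and split $\Gamma$ according to $u_{1}\lessgtr 1-r$ and $u_{2}\lessgtr (1-r)/3$, applying (\ref{poisson3}) on the small region and (\ref{poisson4}) elsewhere. Two minor remarks: the further refinement $\Gamma_{2}=\Gamma_{2}'\cup\Gamma_{2}''$ you hedged with ``if necessary'' is indeed not needed here, and the three arguments of $p_{r}$ after the coordinate changes are $2\pi u_{2}$, $\pi(u_{1}+u_{2})$, $\pi(u_{1}-u_{2})$ rather than half those values, a slip that is harmless for the $\lesssim$ estimates.
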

\begin{proof}
 If we set
   \begin{equation}
       \lambda_{n}^{(r)}:=\frac{1}{\left |\Omega\right |}\int_{\Omega}\left \|\textbf{t}\right \|P_{r}(\textbf{t})d\textbf{t},\notag
   \end{equation}
   by (\ref{poisson}) 
   \begin{align}
       \left |f(\textbf{t})-U_{r}(f)(\textbf{t})\right |&\leq \frac{1}{\left |\Omega\right |}\int_{\Omega}\left |f(\textbf{t})-f(\textbf{t}-\textbf{s})\right |P_{r}(\textbf{s})d\textbf{s}\notag\\
       &\leq \frac{1}{\left |\Omega\right |}\int_{\Omega}\omega_{f}\left (\left \|\textbf{s}\right \|\right )P_{r}(\textbf{s})d\textbf{s}\notag \\
       &=\frac{1}{\left |\Omega\right |}\int_{\Omega}\omega_{f}\left (\frac{\left \|\textbf{s}\right \|}{ \lambda_{n}^{(r)}} \lambda_{n}^{(r)}\right )P_{r}(\textbf{s})d\textbf{s}\notag\\
       &\leq \omega_{f}\left (\lambda_{n}^{(r)}\right )\frac{1}{\left |\Omega\right |}\int_{\Omega}\left (1+\frac{\left \|\textbf{s}\right \|}{\lambda_{n}^{(r)}}\right )P_{r}(\textbf{s})d\textbf{s}\notag \\
       &=\omega_{f}\left (\lambda_{n}^{(r)}\right )\left (\frac{1}{\left |\Omega\right |}\int_{\Omega}P_{r}(\textbf{s})d\textbf{s}+\frac{1}{\lambda_{n}^{(r)}}\frac{1}{\left |\Omega\right |}\int_{\Omega}\left \|\textbf{s}\right \|P_{r}(\textbf{s})d\textbf{s}\right )\notag \\
       &=2\omega_{f}\left (\lambda_{n}^{(r)}\right ).\notag
   \end{align}
  Since the function $\omega_{f}\left (\cdot \right )$ is non-decreasing we have to estimate the quantity $\lambda_{n}^{(r)}.$ If we set 
  \begin{align}
     Q_{r}(\textbf{t})&:= \frac{2(1-r)^{2}}{q_{r}\left (\frac{2\pi (t_{1}-t_{2})}{3}\right )q_{r}\left (\frac{2\pi (t_{2}-t_{3})}{3}\right )}+\frac{2(1-r)^{2}}{q_{r}\left (\frac{2\pi (t_{2}-t_{3})}{3}\right )q_{r}\left (\frac{2\pi (t_{3}-t_{1})}{3}\right )}\notag \\
     &+\frac{2(1-r)^{2}}{q_{r}\left (\frac{2\pi (t_{3}-t_{1})}{3}\right )q_{r}\left (\frac{2\pi (t_{1}-t_{2})}{3}\right )},\notag
\end{align}
(\ref{poisson1}) implies that it will be sufficient to estimate the integral
\begin{equation}
    \int_{\Delta}\left \|\textbf{t}\right \|Q_{r}(\textbf{t})d\textbf{t}.\notag
\end{equation}
If we use the change of variables (\ref{transform1}) and (\ref{transform2}) estimating the integral 
\begin{equation}
    \int_{\Gamma}u_{1}Q_{r}^{*}(u_{1},u_{2})du_{1}du_{2},\notag
\end{equation}
where
\begin{align}
    Q_{r}^{*}(u_{1},u_{2})&:=p_{r}\left (\pi (u_{1}+u_{2})\right )p_{r}\left (2\pi u_{2}\right )\notag \\
    &+p_{r}\left (\pi (u_{1}-u_{2})\right )p_{r}\left (\pi (u_{1}+u_{2})\right )\notag \\
    &+p_{r}\left (\pi (u_{1}-u_{2})\right )p_{r}\left (2\pi u_{2}\right )\notag
\end{align}
will be sufficient. We write the triangle $\Gamma$ as $\Gamma =\Gamma_{1}^{*}\cup \Gamma_{2}^{*}\cup \Gamma_{3}^{*}$, where
\begin{align}
    \Gamma_{1}^{*}&:=\lbrace (u_{1},u_{2})\in \Gamma :u_{1}\leq 1-r\rbrace \notag \\
    \Gamma_{2}^{*}&:=\lbrace (u_{1},u_{2})\in \Gamma :u_{1}\geq 1-r, u_{2}\leq \frac{1-r}{3}\rbrace \notag \\
    \Gamma_{3}^{*}&:=\lbrace (u_{1},u_{2})\in \Gamma :u_{1}\geq 1-r, u_{2}\geq \frac{1-r}{3}\rbrace, \notag
\end{align}
and hence
\begin{equation}
     \int_{\Gamma}u_{1}Q_{r}^{*}(u_{1},u_{2})du_{1}du_{2}=\sum_{k=1}^{3}\left (\int_{\Gamma_{k}^{*}}u_{1}Q_{r}^{*}(u_{1},u_{2})du_{1}du_{2}\right ).\notag
\end{equation}
By (\ref{poisson3}),
\begin{align}
    \int_{\Gamma_{1}^{*}}u_{1}Q_{r}^{*}(u_{1},u_{2})du_{1}du_{2}&=\int_{0}^{\frac{1-r}{3}}\left (\int_{3u_{2}}^{1-r}u_{1}Q_{r}^{*}(u_{1},u_{2})du_{1}\right )du_{2}\notag \\
    &\lesssim \frac{1}{(1-r)^{2}}\int_{0}^{\frac{1-r}{3}}\left (\int_{3u_{2}}^{1-r}u_{1}du_{1}\right )du_{2}\notag \\
    &\lesssim 1-r.\notag
\end{align}
If we consider (\ref{poisson3}) and (\ref{poisson4}) and taking into account the inequality $u_{1}-u_{2}\geq \frac{2}{3}u_{1}$,
\begin{align}
     \int_{\Gamma_{2}^{*}}u_{1}Q_{r}^{*}(u_{1},u_{2})du_{1}du_{2}&=\int_{0}^{\frac{1-r}{3}}\left (\int_{1-r}^{1}u_{1}Q_{r}^{*}(u_{1},u_{2})du_{1}\right )du_{2}\notag \\
     &\lesssim \int_{0}^{\frac{1-r}{3}}\left (\int_{1-r}^{1}u_{1} \left (\frac{1}{\pi ^{2}(u_{1}+u_{2})^{2}} +\frac{1}{\pi ^{2}(u_{1}-u_{2})^{2}}\right )du_{1}\right )du_{2}\notag \\
     &\leq  \int_{0}^{\frac{1-r}{3}}\left (\int_{1-r}^{1}\frac{1}{u_{1}}du_{1} \right )du_{2}=\frac{1-r}{3} \left |\log (1-r)\right |\notag \\
     &\leq (1-r)\left |\log (1-r)\right |.\notag
\end{align}
Now considering (\ref{poisson4}) again and using $u_{1}-u_{2}\geq \frac{2}{3}u_{1}$, we get 
\begin{align}
     \int_{\Gamma_{3}^{*}}u_{1}Q_{r}^{*}(u_{1},u_{2})du_{1}du_{2}&=\int_{\frac{1-r}{3}}^{\frac{1}{3}}\left (\int_{3u_{2}}^{1}u_{1}Q_{r}^{*}(u_{1},u_{2})du_{1}\right )du_{2}\notag \\
     &\lesssim (1-r)^{2} \int_{\frac{1-r}{3}}^{\frac{1}{3}}\left (\int_{3u_{2}}^{1}\frac{1}{u_{1}u_{2}^{2}}du_{1}\right )du_{2}\notag \\
     &\lesssim (1-r)\left |\log (1-r)\right |.\notag
\end{align}
\end{proof}
The analogue of Theorem 4 was proved in \cite{natanson} for Abel-Poisson means of classical Fourier series.


\begin{thebibliography}{9}
\bibitem{devore lorentz} R. A. DeVore and G. G. Lorentz, Constructive Approximation, Springer, New York (1993).
\bibitem{fuglede}B. Fuglede, Commuting self-adjoint partial differential operators and group theoretic problem, J. Funct. Anal. \textbf{16} (1974), 101-121. 
\bibitem{guven jca} A. Guven, Approximation by means of hexagonal Fourier series in Hölder norms, J. Classical Anal. \textbf{1} (2012), 43-52.
\bibitem{guven mia}A. Guven, Approximation by $(C,1)$ and Abel-Poisson means of Fourier series on hexagonal domains, Math. Inequal. Appl. \textbf{16} (2013), 175-191.
\bibitem{guven cmft} A. Guven, Comput. M Approximation properties of hexagonal Fourier series in the generalized Hölder metric, Comput. Methods Funct. Theory \textbf{13} (2013), 509-531.
\bibitem{guven zaa} A. Guven, Approximation by Riesz means of hexagonal Fourier series, Z. Anal. Anwend. \textbf{36} (2017), 1-16.
\bibitem{guven jnaat 2018} A. Guven, Approximation of continuous functions on hexagonal domains, J. Numer. Anal. Approx. Theory \textbf{47} (2018), 42-57.
\bibitem{guven rima}A. Guven, Approximation of continuous functions by matrix means of hexagonal Fourier series, Results Math. \textbf{73} (2018), Paper No. 18, 25 pp.
\bibitem{guven jnaat 2020} A. Guven, Trigonometric approximation on the hexagon, J. Numer. Anal. Approx. Theory \textbf{49} (2020), 138-154.
\bibitem{guven tjm} A. Guven, Degree of approximation by means of hexagonal Fourier series, Turkish J. Math. \textbf{44} (2020), 970-985.
\bibitem{li sun xu} H. Li, J. Sun and Y. Xu, Discrete Fourier analysis, cubature and interpolation on a hexagon and on a triangle, SIAM J. Numer. Anal. \textbf{46} (2008), 1653-1681.
\bibitem{natanson} I. P. Natanson, On the degree of approximation to a continuous function of period $2\pi$ by means of its Poisson integral (Russian), Doklady Akad. Nauk. SSSR (N. S.) \textbf{72} (1950), 11-14.
\bibitem{taberski}R. Taberski, On the convergence of singular integrals, Zeszyty Nauk. Uniw. Mickiewicza No. \textbf{25} (1960) 33–51.
\bibitem{ulyanov}P. L. Ul'yanov, On the approximation of functions (Russian), Sibirsk. Mat. Z. \textbf{5} (1964), 418-437.
\bibitem{xu} Y. Xu, Fourier series and approximation on hexagonal and triangular domains, Constr. Approx. \textbf{31} (2010), 115-138.
\bibitem{zhizhiashvili} L. Zhizhiashvili, Trigonometric Fourier Series and Their Conjugates, Kluwer Academic Publishers, Dordrecht (1996).
\bibitem{zygmund}A. Zygmund, Trigonometric Series, Cambridge Univ. Press, Cambridge (1959).
\end{thebibliography}
\end{document}